\newcommand\myfigure[1]{
\medskip\noindent\begin{minipage}{\columnwidth}
\centering
\end{minipage}\medskip}
\theoremstyle{plain}
\newtheorem{theorem}{Theorem}[section]
\newtheorem{lemma}[theorem]{Lemma}
\theoremstyle{definition}
\newtheorem{definition}[theorem]{Definition}
\newtheorem{example}[theorem]{Example}
\newtheorem{remark}[theorem]{Remark}
\theoremstyle{remark}
\newcommand\RR{\mathbb{R}}
\newcommand\x{\boldsymbol{x}}
\newcommand\w{\boldsymbol{w}}
\newcommand\s{\boldsymbol{s}}
\newcommand\z{\boldsymbol{z}}
\newcommand\X{\boldsymbol{X}}
\newcommand\bv{\boldsymbol{v}}
\newcommand\bM{\boldsymbol{M}}
\newcommand\bn{\boldsymbol{n}}
\newcommand\bu{\boldsymbol{u}}
\def\GG{\mathcal{G}}
\noindent\makeboX[0mm][r]{$\bullet$}}
\begin{document}

\title{Endotactic Networks and Toric Differential Inclusions}
\author[1]{Gheorghe Craciun}
\author[2]{Abhishek Deshpande}
\affil[1]{Department of Mathematics and Department of Biomolecular Chemistry, University of Wisconsin-Madison, {\tt craciun@math.wisc.edu}.}
\affil[2]{Department of Mathematics, University of Wisconsin-Madison, {\tt deshpande8@wisc.edu}.}
\maketitle

\begin{abstract}
An important dynamical property of biological interaction networks is {\em persistence}, which intuitively means that ``no species goes extinct". It has been conjectured that  dynamical system models of weakly reversible networks (i.e., networks for which each reaction is part of a cycle) are persistent. The property of persistence is also related to the well known {\em global attractor conjecture}. An approach for the proof of the global attractor conjecture uses an embedding of weakly reversible dynamical systems into {\em toric differential inclusions}. We show that the larger class of \textit{endotactic dynamical systems} can also be embedded into toric differential inclusions. Moreover, we show that, essentially, endotactic networks form the largest class of networks with this property.

\end{abstract}

\section{Introduction}

Mathematical models of biological interaction networks are often power-law dynamical systems, given by
\begin{eqnarray}\label{eq:basic_polynomial}
\frac{d\x}{dt} = \displaystyle\sum_{i=1}^r \x^{\s_i}\w_i,
\end{eqnarray}

\noindent
where $\x \in \mathbb{R}^n_{>0}$, $\s_i, \w_i \in \mathbb{R}^n$, and $\x^{\s} = x_1^{s_1}  x_2^{s_2} ... x_n^{s_n}$. In particular, all polynomial dynamical systems are of the form given by (\ref{eq:basic_polynomial}). A key question in this context is the following: Given an initial condition $\x(0)\in\mathbb{R}^n_{>0}$, is it true that none of the species become extinct, i.e., $\displaystyle\liminf\limits_{t\rightarrow\infty}\x_i(t) > 0$ for all $i$? It has been conjectured that if the underlying network is weakly reversible, then no species can become extinct~\cite{feinberg1987chemical}. This has been called the \textit{Persistence Conjecture}~\cite{craciun2013persistence} and has been an open problem for several decades. The property of persistence is related to another conjecture called the \textit{Global Attractor Conjecture}, which asserts the existence of a unique globally attracting equilibrium for complex balanced systems~\cite{craciun2009toric}. Several special cases of this conjecture have been proved using a variety of approaches~\cite{anderson2011proof,craciun2013persistence,pantea2012persistence,gopalkrishnan2014geometric}, and a proof of this conjecture in full generality has been proposed recently in~\cite{craciun2015toric}. A key component of this proof relies on the machinery of differential inclusions to build \textit{zero-separating surfaces} and eventually prove the existence of a unique globally attracting steady state. More specifically, the proof uses the embedding of weakly reversible dynamical systems into \textit{toric differential inclusions}. The main contribution of this paper is to show that a much larger class of dynamical systems called \textit{endotactic dynamical systems} can also be embedded into toric differential inclusions. In addition, we show that endotactic dynamical systems are, essentially, the largest class of dynamical systems that possess this property.

This paper is structured as follows: In Section~\ref{sec:E-graph_endotactic}, we introduce \textit{reaction networks}, which are directed graphs in Euclidean space, called \textit{Euclidean embedded graphs} (or \textit{E-graphs}). In particular, we define 
\textit{endotactic E-graphs}, which can be thought of as ``inward pointing" reaction networks. We also introduce dynamical systems generated by E-graphs. To incorporate the uncertainty associated with measuring the rate constants in the network, we allow them to take values in a bounded interval, and we refer to these more general (non-autonomous) systems  as \textit{variable-$k$ dynamical systems}. In Section~\ref{sec:persistence_permanence_GAC}, we introduce the notions of persistence and permanence and describe their connection to the Global Attractor Conjecture. In Section~\ref{sec:polyhedral_fan_TDI}, we describe polyhedral fans and toric differential inclusions. In Section~\ref{sec:embed_endotactic_TDI}, we state our main result, given by Theorem~\ref{thm:embed_endotactic_tdi}, which says that any variable-$k$ endotactic dynamical system can be embedded into a toric differential inclusion. In Section~\ref{sec:converse_endotactic_TDI}, in Theorem~\ref{thm:converse_endotactic}, we give a converse to Theorem~\ref{thm:embed_endotactic_tdi}. More precisely, we show that if any variable-$k$ dynamical system generated by an E-graph can be embedded into a toric differential inclusion, then this E-graph must be endotactic. We conclude that among all E-graphs, it is \textit{endotactic E-graphs} that are most naturally associated with toric differential inclusions.

\section{Euclidean embedded graphs and endotactic networks}\label{sec:E-graph_endotactic}

An Euclidean embedded graph (abbreviated as E-graph) is a finite directed graph $\GG=(V,E)$, where $V\subset\mathbb{R}^n$ is the set of vertices and $E$ is the set of edges~\cite{craciun2015toric,craciun2019polynomial}. Every edge $\s_i\rightarrow\s'_i\in E$ has a \textit{source} vertex $\s_i\in V$, a \textit{target} vertex $\s'_i\in V$ and an \textit{edge vector} $\s'_i-\s_i$. We will also write  $\s_i \to \s_i' \in E$. Figure~\ref{fig:euclidean_embedded_graph} shows a few examples of Euclidean embedded graphs. A reaction network can be regarded as a Euclidean embedded graph $\GG=(V,E)$, where $E$ is the set of reactions~\cite{craciun2019polynomial}.  

\begin{figure}[ht!]
\centering
\includegraphics[scale=0.4]{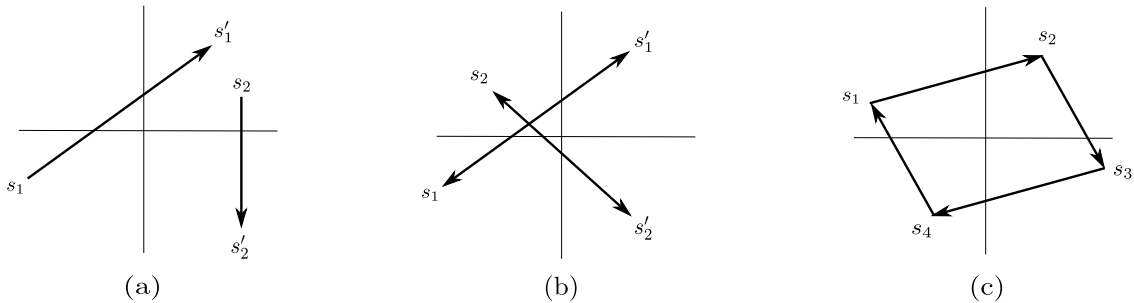}
\caption{\small Examples of E-graphs.}
\label{fig:euclidean_embedded_graph}
\end{figure} 

\noindent
An E-graph $\GG=(V,E)$ is \textit{reversible} if for every edge $\s_i\rightarrow \s'_i\in E$, we also have $\s'_i\rightarrow \s_i\in E$. An E-graph is \textit{weakly reversible} if every edge in $E$ is part of some cycle. Figure~\ref{fig:euclidean_embedded_graph}(b) shows a reversible E-graph, and  Figure~\ref{fig:euclidean_embedded_graph}(c) shows a weakly reversible E-graph. Note that every reversible E-graph is weakly reversible. In this paper, we are interested in {\em endotactic} E-graphs, which include the set of weakly reversible E-graphs. Endotactic E-graphs have originally been introduced in~\cite{craciun2013persistence}, where persistence of two-dimensional endotactic networks was proved. Since then, several equivalent formulations of endotactic E-graphs have been given, including some that have been used for checking the condition of endotacticity algorithmically~\cite{johnston2016computational}. We use the formulation presented in~\cite{johnston2016computational} as our definition of an endotactic E-graph:

\begin{definition}\label{def:endotactic}
Let $\GG=(V,E)$ be an E-graph. Then $\GG$ is \textit{endotactic} if for every $\w\in\RR^n$ and $\s_i\rightarrow\s'_i\in E$ such that $\w\cdot(\s'_i - \s_i)<0$, there exists $\s_j\rightarrow\s'_j\in E$ such that $\w\cdot (\s'_j - \s_j)>0$ and $\w\cdot \s_j < \w\cdot \s_i$.
\end{definition} 
Note that an E-graph is endotactic iff all its one-dimensional projections are endotactic~\cite{craciun2013persistence}.
For two dimensional E-graphs, it suffices to check if its one-dimensional projections are endotactic for a finite set of vectors given by (i) the inward pointing normals to the boundary of the convex hull of source vertices and (ii) the vectors corresponding to the cartesian axes~\cite{craciun2013persistence}. Other equivalent methods of checking whether a network is endotactic include the ``parallel sweep test" introduced in~\cite{craciun2013persistence}. See Figure~\ref{fig:network_types} for examples.

\begin{figure}[ht!]
\centering
\includegraphics[scale=0.5]{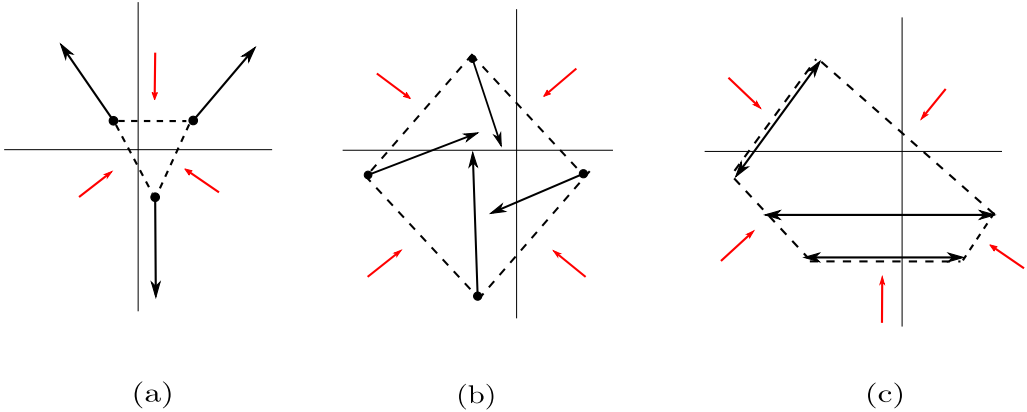}
\caption{\small Other examples of E-graphs. Dotted black lines indicate the convex hull of the set of source vertices. Red arrows are inward pointing normals to this convex hull. (a) This E-graph is not endotactic, since its one-dimensional projection on at least one of the inward pointing red normals (in fact all of them in this case) is not endotactic. (b) This E-graph is endotactic since its one-dimensional projections on the red vectors and cartesian axes are endotactic~\cite{craciun2013persistence}. (c) This E-graph is endotactic since any reversible (or weakly reversible) E-graph is endotactic~\cite{craciun2013persistence}.}
\label{fig:network_types}
\end{figure} 

Associated with each E-graph is a family of dynamical systems. Inspired by {\em mass action kinetics}~\cite{guldberg1864studies,craciun2013persistence,feinberg1987chemical,yu2018mathematical,adleman2014mathematics}, a dynamical system generated by $\GG$ is given by\footnote{Note that (\ref{eq:mass_action_dynamics_autonomous}) is a mass-action system if $\s_i,\s'_i\in\mathbb{R}^n_{\geq 0}$ for all $\s_i\rightarrow \s'_i\in E$.}
\begin{eqnarray}\label{eq:mass_action_dynamics_autonomous}
\frac{d\x}{dt}=\displaystyle\sum_{\s_i \to \s'_i\in E} k_{\s_i\rightarrow \s'_i}\x^{\s_i}(\s'_i-\s_i),
\end{eqnarray}
where $k_{\s_i\rightarrow \s'_i}$ is a (positive) rate constant corresponding to the edge $\s_i\rightarrow \s'_i$. A \textit{weakly reversible dynamical system} is a dynamical system given by (\ref{eq:mass_action_dynamics_autonomous}) for some weakly reversible E-graph $\GG=(V,E)$. An equilibrium point of (\ref{eq:mass_action_dynamics_autonomous}) is a point $\x_0\in\mathbb{R}^n_{\geq 0}$ such that the right-hand side of (\ref{eq:mass_action_dynamics_autonomous}) evaluated at $\x_0$ is zero. A \textit{complex balanced equilibrium} of (\ref{eq:mass_action_dynamics_autonomous}) is a point $\x_0\in\mathbb{R}^n_{\geq 0}$ such that for any vertex $\s_0$ of $\GG$, we have
\begin{eqnarray}
\displaystyle\sum_{\s_0\rightarrow \s'_0 \in E}k_{\s_0\rightarrow \s'_0}\x^{\s_0} = \displaystyle\sum_{\s'_0\rightarrow \s_0\in E}k_{\s'_0\rightarrow \s_0}\x^{\s'_0}.
\end{eqnarray}
A \textit{complex balanced dynamical system} is a dynamical system given by (\ref{eq:mass_action_dynamics_autonomous}) that admits a positive complex balanced equilibrium.\footnote{If we assume that all the vertices in $\GG$ have non-negative integer coordinates, then complex balanced dynamical system are known as ``toric dynamical systems"~\cite{craciun2009toric}.}

More generally, rate constants in (\ref{eq:mass_action_dynamics_autonomous}) could be time-dependent~\cite{craciun2013persistence}, to account for external factors that may influence the dynamics of the network. Time-dependent rate constants have also been considered in~\cite{gopalkrishnan2014geometric} and have been called a ``tempering". In this framework, the E-graph $\GG$ generates non-autonomous dynamical systems of the form

\begin{eqnarray}\label{eq:mass_action_dynamics}
\frac{d\x}{dt}=\displaystyle\sum_{\s_i \to \s'_i\in E} k_{\s_i\rightarrow \s'_i}(t)\x^{\s_i}(\s'_i-\s_i).
\end{eqnarray}

\noindent A dynamical system of the form (\ref{eq:mass_action_dynamics}) is called a {\em variable-$k$ power law dynamical system}\footnote{Some previous works~\cite{craciun2013persistence,johnston2016computational,pantea2012persistence} used ``$k$-variable" instead of ``variable-$k$".} if there exists an $\epsilon>0$ such that $\epsilon\leq k_{\s_i\rightarrow \s'_i}(t) \leq\frac{1}{\epsilon}$ for every $\s_i\rightarrow \s'_i\in E$~\cite{craciun2015toric}. Note that we have not placed any restriction on the vectors $\s_i$. The results in this paper hold in the more general setting of power law systems. An \textit{endotactic variable-k dynamical system} is a dynamical system given by (\ref{eq:mass_action_dynamics}) for some endotactic E-graph $\GG=(V,E)$.

\section{Persistence, permanence and the global attractor conjecture}\label{sec:persistence_permanence_GAC} 

Consider an E-graph $\GG=(V,E)$ and a dynamical system represented by (\ref{eq:mass_action_dynamics}). The \textit{stoichiometric subspace} of $\GG$ is 
\begin{eqnarray}
S=\text{span}\{\s'_i-\s_i\mid \s_i\rightarrow \s'_i \in E\}.
\end{eqnarray}
Note that the right hand side of (\ref{eq:mass_action_dynamics}) belongs to $S$, for all $\x$ and $t$. Given $\x^*\in\RR^n_{>0}$, the \textit{stoichiometric compatibility class} of $\x^*$ is the \textit{invariant} polyhedron 
\begin{eqnarray}
C_{\x^*}=(\x^*+S)\,\cap\,\RR^n_{>0}.
\end{eqnarray}
Let $\x(t)$ be a solution of the dynamical system (\ref{eq:mass_action_dynamics}). A dynamical system is said to be \textit{persistent} if for any initial condition $\x(0)\in\RR^n_{>0}$, we have 
\begin{eqnarray}
\displaystyle\liminf_{t\rightarrow T_{\x(0)}}\x_i(t)>0
\end{eqnarray}
for all $i=1,2,...,n$, where $T_{\x(0)}$ is the largest $T$ such that $\x(t)$ is well defined on $[0,T)$. A dynamical system is \textit{permanent} if for any initial condition $\x(0)\in\RR^n_{>0}$, there exists a compact set $M\subset C_{\x(0)}$ such that the trajectory satisfies $\x(t)\in M$ for $t$ large enough. It follows that if a dynamical system is permanent, then it is persistent. A point $\x^*\in\RR^n_{>0} $ is called a \textit{global attractor} within its stoichiometric compatibility class $C_{\x^*}$ if 
\begin{eqnarray}
\displaystyle\lim_{t\rightarrow\infty}\x(t)= \x^*
\end{eqnarray}
for all solutions $\x(t)$ with $\x(0)\in C_{\x^*}$. We now have enough terminology to state the main open problems in this field~\cite{craciun2013persistence}:\\

\noindent\textbf{Persistence Conjecture:} \textit{Weakly reversible dynamical systems are persistent.} \\

\noindent\textbf{Extended Persistence Conjecture:} \textit{Endotactic variable-$k$ dynamical systems are persistent.} \\

\noindent\textbf{Permanence Conjecture:} \textit{Weakly reversible dynamical systems are permanent.} \\

\noindent\textbf{Extended Permanence Conjecture:} \textit{Endotactic variable-$k$ dynamical systems are permanent.} \\

\noindent\textbf{Global Attractor Conjecture:} \textit{For complex-balanced dynamical systems, each stoichiometric compatibility class has a unique globally attracting equilibrium.} \\

It is known that the Extended Permanence Conjecture implies all the other conjectures. In particular, the Extended Permanence Conjecture implies the Permanence conjecture, which implies the Persistence conjecture. Further, it can be shown that persistence of variable-$k$ weakly reversible dynamical systems in dimension $n$ implies the Global Attractor Conjecture in dimension $n+1$~\cite{craciun2013persistence,craciun2015toric}. We now describe various results towards the proof of these conjectures.

The persistence properties of dynamical systems on the positive orthant are strongly related to the existence of $\omega$-limit points on the boundary of the positive orthant. It is known from~\cite{siegel2000global,sontag2001structure} that $\omega$-limit points of complex balanced dynamical systems must be equilibrium points (either positive or on the boundary). Among the equilibrium points on the boundary, the vertices~\cite{anderson2008global,craciun2009toric} and facet interiors~\cite{anderson2010dynamics} of the stoichiometric compatibility classes are forbidden from containing $\omega$-limit points. In addition, Angeli, De Leenheer and Sontag have shown that for conservative networks, no boundary equilibria can be $\omega$-limit points if every \textit{siphon} contains a linear conserved quantitiy~\cite{angeli2007petri}. For more details about how siphons are related to the structure of reaction networks, see~\cite{gopalkrishnan2011catalysis,deshpande2014autocatalysis}.

Anderson~\cite{anderson2011proof} has proved the Global Attractor Conjecture for networks with a single connected component. Moreover, Craciun, Nazarov and Pantea~\cite{craciun2013persistence} have proved the Extended Permanence Conjecture for the two dimensional case and have used this to prove the Global Attractor Conjecture in the three dimensional case. These results have been extended by Pantea~\cite{pantea2012persistence} to endotactic dynamical systems with two dimensional stoichiometric subspace and to complex balanced dynamical systems with three dimensional stoichiometric subspace. Gopalkrishnan, Miller and Shiu~\cite{gopalkrishnan2014geometric} have proved the permanence of variable-$k$ {\em strongly} endotactic networks in any dimension. More recently, Craciun~\cite{craciun2015toric} has proposed a proof of the Global Attractor Conjecture in full generality by using an embedding of weakly reversible dynamical systems into toric differential inclusions.

As shown in~\cite{craciun2015toric}, toric differential inclusions can be used in the construction of invariant regions, and invariant regions can be used to prove persistence and permanence~\cite{craciun2013persistence,pantea2012persistence,gopalkrishnan2014geometric} of some classes of power-law dynamical systems. Therefore, the results presented here point to a future avenue for proving the Extended Persistence and Extended Permanence Conjectures. 

\section{Polyhedral fans and toric differential inclusions}\label{sec:polyhedral_fan_TDI}

Polyhedral cones form the basic building blocks of polyhedral fans and toric differential inclusions. We will define these notions explicitly in this section. A set $C$ is a polyhedral cone~\cite{rockafellar2015convex,ziegler2012lectures} if there exists a finite set of vectors $\bv_1,\bv_2,...,\bv_m$ such that 
\begin{eqnarray}
C=\left\{\displaystyle\sum_{i=1}^m\alpha_i\bv_i\mid \alpha_i\in\RR_{\geq 0}\right\}.
\end{eqnarray}
Equivalently, $C$ can be written as a finite intersection of half-spaces. A \textit{face} of a polyhedral cone is the intersection of the cone with a supporting hyperplane. A \textit{facet} is a face of codimension 1. We now have enough terminology to define a polyhedral fan.

\begin{definition}
\cite{fulton1993introduction}\ Consider a finite collection $\mathcal{F}=\{C_1,C_2,..., C_r\}$ of polyhedral cones in $\mathbb{R}^n$. Then $\mathcal{F}$ is a \textit{polyhedral fan} if (i) any face of a cone $C_i\in\mathcal{F}$ also belongs to $\mathcal{F}$ and (ii) given any two cones $C_i,C_j\in\mathcal{F}$, their intersection $C_i\cap C_j$ is a face of both $C_i$ and $C_j$.
\end{definition} 
\noindent $\mathcal{F}$ is a \textit{complete polyhedral fan} if $\displaystyle\bigcup_{C_i\in\mathcal{F}}C_i=\mathbb{R}^n$. The following lemma makes an useful observation about complete polyhedral fans.
\begin{lemma}\label{lem:maximal_cones}
Let $\mathcal{F}$ be a complete polyhedral fan in $\mathbb{R}^n$. Consider a cone $C_0\in\mathcal{F}$. Then $C_0$ is the intersection of all the maximal cones in $\mathcal{F}$ that contain it.
\end{lemma}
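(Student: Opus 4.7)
The plan is to reduce the problem to a special case via a quotient construction and then exploit pointedness together with completeness. Let $M_1, \dots, M_r$ denote the maximal cones of $\mathcal{F}$ containing $C_0$. The inclusion $C_0 \subseteq \bigcap_i M_i$ is immediate from the fact that each $M_i$ contains $C_0$, so the content lies in the reverse inclusion.

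Set $L = \operatorname{span}(C_0)$ and let $\pi: \mathbb{R}^n \to \mathbb{R}^n/L$ be the canonical quotient. I will first verify that $\{\pi(D) : D \in \mathcal{F},\ C_0 \subseteq D\}$ is a complete polyhedral fan of \emph{pointed} cones in $\mathbb{R}^n/L$ whose apex (the image of $C_0$) is $\{0\}$. The main structural input is that $D \cap L = C_0$ whenever $C_0$ is a face of $D$: picking a supporting hyperplane $H$ of $D$ with $C_0 = D \cap H$, we have $L = \operatorname{span}(C_0) \subseteq H$, so $D \cap L \subseteq D \cap H = C_0$. From this one reads off that each $\pi(D)$ is pointed (if $w, w' \in D$ satisfy $\pi(w) = -\pi(w')$ then $w + w' \in D \cap L = C_0$, and the nonnegative supporting functional for $C_0$ forces $w, w' \in C_0$, so $\pi(w) = 0$), while a short perturbation argument using a point $y \in \operatorname{relint}(C_0)$---for any $x \in \mathbb{R}^n$ and $t > 0$ small, $y + tx$ lies in some $D \in \mathcal{F}$ with $C_0 \subseteq D$, hence $\pi(x) \in \pi(D)$---shows that the quotient fan is complete.

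By a dimension count (maximal cones of a complete fan are full-dimensional), the maximal cones of the quotient fan are exactly the images $\pi(M_1), \dots, \pi(M_r)$. The heart of the argument is then the following \emph{pointed apex lemma}: in a complete fan of pointed cones, the intersection of all maximal cones equals $\{0\}$. To prove it, suppose $p \ne 0$ lies in every maximal cone; by completeness some maximal cone $M^\ast$ contains $-p$, and by hypothesis it also contains $p$, hence the whole line $\mathbb{R} p$, contradicting pointedness of $M^\ast$.

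To finish, any $x \in \bigcap_i M_i$ satisfies $\pi(x) \in \bigcap_i \pi(M_i) = \{0\}$, so $x \in L$; combining with $x \in M_1$ and $M_1 \cap L = C_0$ gives $x \in C_0$, as required. The step I expect to be most delicate is the careful verification of the quotient fan axioms---in particular the identity $\pi(D_1) \cap \pi(D_2) = \pi(D_1 \cap D_2)$, which one establishes by writing any element of $L$ as a difference of two elements of $C_0$ and using this to correct representatives to lie in $D_1 \cap D_2$---while the pointed apex lemma itself is a one-line consequence of completeness.
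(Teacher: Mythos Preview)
Your proof is correct, but it takes a genuinely different route from the paper's.

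The paper argues directly at the level of facets: pick one maximal cone $C_0^n \supseteq C_0$, use the standard fact (cited from Fulton) that a face of a polyhedral cone equals the intersection of the facets containing it to write $C_0 = \bigcap_i C_i^{n-1}$ with each $C_i^{n-1}$ a facet of $C_0^n$, and then invoke completeness to say each such facet is the common face of exactly two maximal cones. This exhibits $C_0$ as an intersection of finitely many maximal cones containing it, which together with the trivial inclusion finishes the proof in a few lines.

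Your argument instead passes to the star/quotient fan $\{\pi(D): C_0 \subseteq D\}$ in $\mathbb{R}^n/\operatorname{span}(C_0)$, verifies it is complete and pointed, and then applies your ``pointed apex lemma'' to conclude $\bigcap_i \pi(M_i)=\{0\}$, pulling back via $M_1 \cap L = C_0$. This is more structural---the star fan is a standard construction in toric geometry, and the apex lemma is indeed a one-liner---but the overhead is heavier: you must check the fan axioms for the quotient (which you flag as the delicate part), pointedness, and completeness via the perturbation argument. The paper's facet approach sidesteps all of that machinery at the cost of importing the ``face $=$ intersection of containing facets'' lemma. Both are valid; the paper's is shorter for this specific statement, while yours would generalize more readily and makes the role of completeness (via the apex lemma) more transparent.
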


\begin{proof}
If $C_0\in\mathcal{F}$ is a maximal cone, then we are done. If $C_0$ is not maximal, then let $C^n_0\in\mathcal{F}$ be a maximal cone containing $C_0$. Note that $C_0$ is a face of $C^n_0$. Therefore, using the fact that any face of a cone is the intersection of the facets of the cone that contain it~\cite{fulton1993introduction} we get that $C_0=\displaystyle\bigcap_{i=1}^k C^{n-1}_i$, where each $C^{n-1}_i$ is a facet of $C^n_0$. Since $\mathcal{F}$ is a complete fan, each cone $C^{n-1}_i$ is the intersection of the two maximal cones that contain it. Then, it follows that $C_0$ is the intersection of all the maximal cones in $\mathcal{F}$ that contain it.
\end{proof}

\begin{remark}
Note that Lemma~\ref{lem:maximal_cones} implies that the set of maximal cones of a complete polyhedral fan $\mathcal{F}$ determine $\mathcal{F}$ uniquely. 
\end{remark}

\begin{definition}\label{def:hyperplane-generated-fan}
Consider a set $\mathcal{H}$ of hyperplanes in $\RR^n$ containing the origin. The \textit{hyperplane-generated polyhedral fan} $\mathcal{F}_{\mathcal{H}}$ is the complete polyhedral fan whose maximal cones are the minimal $n$-dimensional intersections of half-spaces generated by hyperplanes in $\mathcal{H}$.
\end{definition}

\noindent Figure~\ref{fig:hyperplane_fan} gives examples of hyperplane-generated polyhedral fans in $\mathbb{R}^2$ and $\mathbb{R}^3$. Note that in general, an entire hyperplane is not a cone in the polyhedral fan (unless the fan consists of exactly one hyperplane, in which case the cones in the fan are the hyperplane and the two-half spaces adjoining it), since its intersection with a maximal cone is not a face of the hyperplane. 

\begin{figure}[ht!]
\centering
\includegraphics[scale=0.37]{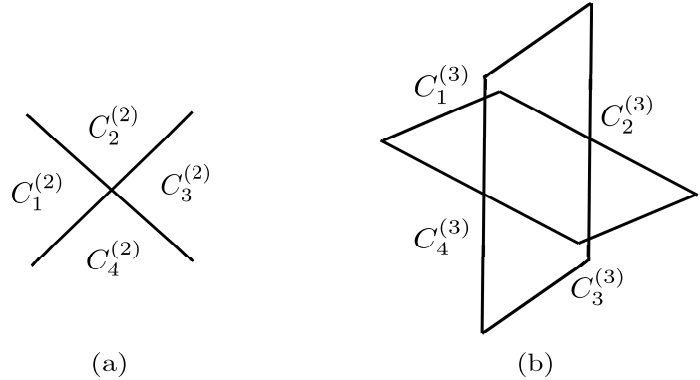}
\caption{\small Illustration of cones in hyperplane-generated polyhedral fans. (a) This polyhedral fan (in $\mathbb{R}^2$) consists of four maximal cones denoted by $C^{(2)}_i$, four one-dimensional cones (the four rays), and one cone of dimension $0$ (the origin). (b) This fan (in $\mathbb{R}^3$) has four maximal cones denoted by $C^{(3)}_i$, four two-dimensional cones and one one-dimensional cone.}
\label{fig:hyperplane_fan}
\end{figure} 

Another notion that we need to introduce is the \textit{polar} of a cone~\cite{rockafellar2015convex,ziegler2012lectures}. The polar of a cone $C$, denoted by $C^o$, is given by: 
\begin{eqnarray}\label{eq:polar_cone}
C^o=\{\z\in\mathbb{R}^n\mid \z\cdot \x\leq 0\,\text{for every}\ \x\in C\}.
\end{eqnarray}

A differential inclusion is a dynamical system of the form
\begin{eqnarray}
\frac{d\x}{dt}\in F(\x),
\end{eqnarray}
where $F$ is a set-valued map.
The stage is now set to define toric differential inclusions, which have been introduced for the purpose of proving  the global attractor conjecture~\cite{craciun2015toric}. For more examples on the use of toric differential inclusions in this context, see~\cite{brunner2018robust}.

\begin{definition}\label{def:toric_differential_inclusion}
A \emph{toric differential inclusion}~\cite{craciun2015toric} is a differential inclusion on $\RR^n_{>0}$ given by a complete polyhedral fan $\mathcal{F}$ and a real number $\delta>0$ according to the formula

\begin{eqnarray}
\frac{d\x}{dt} \in F_{\mathcal{F},\delta}(\log(\x)),
\end{eqnarray}
\noindent where 
\begin{eqnarray}\label{eq:rhs_tdi}
F_{\mathcal{F},\delta}(\X)=\displaystyle\sum_{\substack{C\in\mathcal{F}\\dist(\X,C)\leq\delta}} C^o,
\end{eqnarray}
\noindent for any $\X\in\mathbb{R}^n$.
\end{definition}
\noindent We will denote by $\mathcal{T}_{\mathcal{F},\delta}$ the toric differential inclusion generated by $\mathcal{F}$ and $\delta$.
\bigskip
\noindent From~\cite{rockafellar2015convex}, we know that 
\begin{eqnarray}
\displaystyle\sum_{\substack{C\in\mathcal{F}\\dist(\X,C)\leq\delta}} C^o=\left({\displaystyle\bigcap_{\substack{C\in\mathcal{F}\\dist(\X,C)\leq\delta}} C}\right)^o.
\end{eqnarray}
The right hand side of the toric differential inclusion is therefore given by $F_{\mathcal{F},\delta}(\X) = \mathcal{P}^o
$, where 
\begin{eqnarray}\label{eq:useful_toric}
\mathcal{P}=\displaystyle\bigcap_{\substack{C\in\mathcal{F}\\dist(\X,C)\leq\delta}} C.
\end{eqnarray}
Note that if $C_1\subseteq C_2$ and $dist(\X,C_1)\leq\delta$, then $dist(\X,C_2)\leq\delta$. Therefore, using Lemma~\ref{lem:maximal_cones}, we can rewrite the right-hand side of the toric differential inclusion as
\begin{eqnarray}
F_{\mathcal{F},\delta}(\X) =  \left({\displaystyle\bigcap_{\substack{C\in\mathcal{F}\\dist(\X,C)\leq\delta \\ dim(C)=n}} C}\right)^o.
\end{eqnarray}

\begin{figure}[t!]
\centering
\includegraphics[scale=0.35]{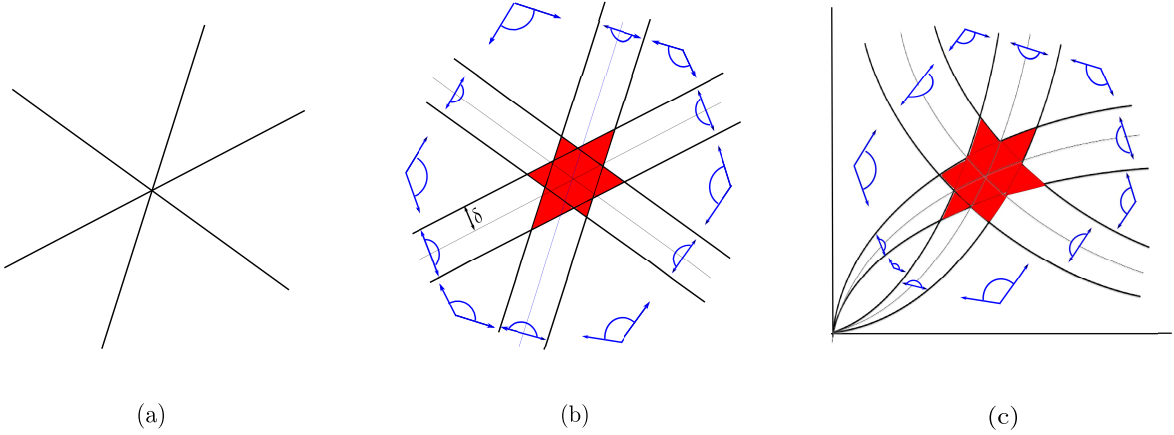}
\caption{\small (a) Hyperplane-generated polyhedral fan. This fan contains $13$ cones: six maximal cones, six cones of dimension one and one cone of dimension zero. (b) The blue cones represent the right-hand side of the toric differential inclusion corresponding to the hyperplane-generated polyhedral fan given in (a) for points $\X \! = \! \log\x$ away from the origin. For points near the origin that are demarcated by the red region, the right-hand side of the toric differential inclusion is the cone $\mathbb{R}^2$. (c) Partitioning the positive orthant $\mathbb{R}^n_{\geq 0}$ by exponentiating the lines in (b). The blue cones are \emph{the same} as in (b).}
\label{fig:toric_differential_inclusion}
\end{figure}

\noindent Figure~\ref{fig:toric_differential_inclusion} depicts the right-hand side of a toric differential inclusion, i.e., $F_{\mathcal{F},\delta}(\X)$ for a hyperplane-generated polyhedral fan.  

\section{Embedding endotactic dynamical systems into toric differential inclusions}\label{sec:embed_endotactic_TDI}

The following definition makes precise the notion of \textit{embedding} a dynamical system into a differential inclusion.

\begin{definition}
We say that the dynamical system $\frac{d\x}{dt}=f(\x,t)$ is \textit{embedded} into the differential inclusion $\frac{d\x}{dt}\in F(\x)$ if $f(\x,t)\in F(\x)$ for all $\x$ and $t$.
\end{definition}

\noindent The goal of this section is to show that any variable-$k$ endotactic dynamical system can be embedded into a toric differential inclusion. We sketch the main idea of the proof. Consider an endotactic E-graph that generates this variable-$k$ endotactic dynamical system. Note that different monomials $\x^{\s_i}$ in (\ref{eq:mass_action_dynamics}) have different magnitudes in different parts of the state space. In particular, we compare the magnitude of monomials when $\x$ is restricted to a well-chosen region given by a fixed projection of the E-graph. Since the one-dimensional projections of this E-graph are endotactic, there exists an edge corresponding to a dominant monomial that points ``inwards". As a consequence, the entire dynamics points inwards. Repeating this procedure for several one-dimensional projections, we conclude that the dynamics of the E-graph points exactly in the cone corresponding to the right-hand side of a toric differential inclusion at that point. Theorem~\ref{thm:embed_endotactic_tdi} makes these ideas precise.

\begin{theorem}\label{thm:embed_endotactic_tdi}
Any variable-$k$ endotactic dynamical system can be embedded into a toric differential inclusion.
\end{theorem}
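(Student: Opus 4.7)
The plan is to construct the toric differential inclusion explicitly from the E-graph $\GG=(V,E)$ and then verify the inclusion pointwise in $\x$ and $t$. I use the hyperplane-generated polyhedral fan $\mathcal{F}$ of Definition~\ref{def:hyperplane-generated-fan} whose defining hyperplanes are
\begin{equation*}
H_{\s,\s'}=\{\X\in\RR^n:\X\cdot(\s-\s')=0\},\qquad \s,\s'\in V,\ \s\ne\s'.
\end{equation*}
Its crucial feature is that the sign of $\w\cdot(\s'_i-\s_i)$ and, more generally, the full ordering of the values $\{\w\cdot\s:\s\in V\}$, are both constant as $\w$ ranges over the interior of any maximal cone. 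I then choose $\delta>0$ depending on $\epsilon$, $|E|$, $D:=\max_{\s,\s'\in V}|\s-\s'|$, and $D_{\min}:=\min_{\s\ne\s'\in V}|\s-\s'|$, large enough that $e^{\delta D_{\min}}$ overwhelms the polynomial constants that will appear.

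To show $\frac{d\x}{dt}\in F_{\mathcal{F},\delta}(\log\x)$, it suffices by~(\ref{eq:useful_toric}) to verify $\frac{d\x}{dt}\cdot\w\le 0$ for every $\w\in\mathcal{P}$. I split into two regimes based on $\X:=\log\x$. If $\|\X\|\le\delta$, then $d(\X,C)\le\|\X\|\le\delta$ for every cone $C$ of $\mathcal{F}$ (each contains the origin), so $\mathcal{P}\subseteq T^\perp$ where $T=\mathrm{span}\{\s-\s':\s,\s'\in V\}$, and therefore $\mathcal{P}^o\supseteq T$. Every edge vector $\s'_i-\s_i$ lies in $T$, so the stoichiometric subspace lies in $\mathcal{P}^o$ and the inclusion is automatic. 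The substantive case is $\|\X\|>\delta$, which I treat by a dominance argument. Fix $\w\in\mathcal{P}$ and let $\s^*$ be any source vertex maximizing $\w\cdot\s$ over the set $S$ of source vertices of $E$. Applying Definition~\ref{def:endotactic} with $-\w$ in place of $\w$ yields the useful contrapositive: for every edge with $\w\cdot(\s'-\s)>0$ there exists an edge with $\w\cdot(\s'_j-\s_j)<0$ whose source satisfies $\w\cdot\s_j>\w\cdot\s$. Since $\s_j$ is itself a source and $\s^*$ already maximizes $\w\cdot\s$ on $S$, no edge out of $\s^*$ can be forward, and the $\s^*$-aggregate $\x^{\s^*}\sum_{i:\s_i=\s^*}k_i(t)(\s'_i-\s_i)\cdot\w$ is nonpositive.

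To quantify the dominance, I first establish a geometric lemma: for any $\X$ in the relative interior of a cone $C_\X$ of $\mathcal{F}$ and any hyperplane $H_{\s,\s'}$ of the arrangement not containing $C_\X$, one has $d(\X,H_{\s,\s'})\ge d(\X,\partial C_\X)$. This follows from a perpendicular-segment argument: the foot of the perpendicular from $\X$ to $H_{\s,\s'}$ either lies in $C_\X\cap H_{\s,\s'}\subseteq\partial C_\X$, in which case the inequality is an equality, or lies outside $C_\X$, in which case the segment from $\X$ to that foot must cross $\partial C_\X$ first. When $d(\X,\partial C_\X)>\delta$ this yields $|\X\cdot(\s-\s')|>\delta|\s-\s'|\ge\delta D_{\min}$ for every pair; applied with $\s=\s^*$, this gives $\X\cdot(\s^*-\s)>\delta D_{\min}$ on the appropriate side for each $\s\in S$ with $\w\cdot\s<\w\cdot\s^*$. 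Hence $\x^{\s^*}/\x^{\s}\ge e^{\delta D_{\min}}$, and the uniform bounds $k_i\in[\epsilon,1/\epsilon]$, together with $|E|$ and $|\s'_i-\s_i|\le D$, show that the leading nonpositive term dominates the sum of all forward contributions once $\delta$ exceeds a threshold of the form $\log(|E|\,D/\epsilon^2)/D_{\min}$. Degenerate configurations---ties at the $\w$-maximum, or aggregate neutrality at $\s^*$---are handled by descending the $\w$-ordering of $S$ to the first level at which a strictly backward aggregate appears; endotacticity guarantees such a level exists whenever the original sum contains a forward term.

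The main obstacle I expect is the case where $\X$ lies near a face $\mathcal{P}$ shared by several maximal cones rather than deep inside a single maximal cone. In that regime $\X$ need not be far from $\partial C_\X$ in the original ambient sense, and one must replace the bound $d(\X,H)\ge d(\X,\partial C_\X)$ by an analogous bound in the relative interior of $\mathcal{P}$ itself---obtained by the same perpendicular-segment argument carried out inside the affine hull of $\mathcal{P}$. This supplies the margin required on hyperplanes that do not contain $\mathcal{P}$, while hyperplanes containing $\mathcal{P}$ correspond precisely to the equalities enforced by $\w\in\mathcal{P}$ and cause no trouble. Once this uniform margin control is in place, the exponential factor $e^{\delta D_{\min}}$ absorbs the remaining polynomial constants and completes the embedding.
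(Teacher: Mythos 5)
Your proposal is correct and follows essentially the same route as the paper's proof: the same hyperplane-generated fan built from vertex differences, the same logarithmic choice of $\delta$, the same use of endotacticity via the $\w$-ordering of the source vertices (first level with a strictly backward edge), and the same exponential dominance of the leading backward monomial over the lower-level terms. The one refinement worth noting is that the margin $d(\X,H)>\delta$ on every arrangement hyperplane $H$ not containing $\mathcal{P}$ follows immediately from the definition of $\mathcal{P}$ --- any cone of the fan within distance $\delta$ of $\X$ contains $\mathcal{P}$, and $H$ is a union of cones of the fan --- which disposes of your ``near a face'' regime without the perpendicular-segment lemma; this is precisely the paper's set $\sigma(\w)$.
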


\begin{proof}
Consider a variable-$k$ dynamical system generated by an endotactic E-graph $\GG=(V,E)$. Let $|E|=r$. Then our dynamical system can be written as\footnote{Note that $\s_1,\s_2,...,\s_r$ may not be all distinct.}
\begin{eqnarray}\label{eq:reaction_dynamics}
\frac{d\x}{dt}=\displaystyle\sum_{i=1}^r k_i(t)\x^{\s_i}(\s'_i-\s_i),
\end{eqnarray}
where $\x\in\RR^n_{>0}$ and $\epsilon\leq k_i(t)\leq\frac{1}{\epsilon}$ for some $\epsilon>0$. If $V_S$ is the set of source vertices in $\GG$, consider the set of hyperplanes $\mathcal{H}=\{(\s_i-\s_j)^{\perp}\mid \s_i\neq \s_j\in V_S\}$. Let $\mathcal{F}_{\mathcal{H}}$ denote the complete polyhedral fan generated by the hyperplanes $\mathcal{H}$. Let 
\begin{eqnarray}
\mathcal{J}=\text{span}\{\s_i-\s_j\mid \s_i,\s_j\in V_S\}.
\end{eqnarray}

\textit{Case 1}. Consider first the case when $\mathcal{J}=\RR^n$. We will show that any cone in $\mathcal{F}_{\mathcal{H}}$ is a pointed cone. Indeed, if there exists a cone $C_0\in\mathcal{F}_{\mathcal{H}}$ that is not pointed, then it contains a line $l$ passing through the origin. We will prove that $l$ is contained in every hyperplane of $\mathcal{H}$. For contradiction, assume that there is a hyperplane $H_0\in\mathcal{H}$ that does not contain $l$. Since $l$ is contained in $C_0$, it is contained in a maximal cone $C_0^n\in\mathcal{F}_{\mathcal{H}}$. Then, the maximal cone $C_0^n$ intersects the interiors of both the half-spaces generated by the hyperplane $H_0$. This contradicts the fact that, according to Definition~\ref{def:hyperplane-generated-fan}, the maximal cones of $\mathcal{F}_{\mathcal{H}}$ are minimal $n$-dimensional intersections of half-spaces generated by hyperplanes containing the origin. Therefore, $l$ is contained in every hyperplane of $\mathcal{H}$. This implies that $l$ is orthogonal to every vector $\s_i-\s_j$ for $\s_i\neq \s_j\in V_S$, contradicting the fact that $\mathcal{J}=\mathbb{R}^n$. 

Let 
\begin{eqnarray}
\mathcal{R}=\{C\in\mathcal{F}_{\mathcal{H}}\mid dim(C)=1\}
\end{eqnarray}
denote the set of one-dimensional cones of the fan $\mathcal{F}_{\mathcal{H}}$ and let $\hat{\mathcal{R}}$ denote the set of unit vectors of the cones in $\mathcal{R}$. Let $\mathcal{T}_{\mathcal{F}_{\mathcal{H}},\delta}$ denote the toric differential inclusion generated by $\mathcal{F}_{\mathcal{H}}$ and $\delta$. We will show that if we choose

\begin{eqnarray}
\delta=\frac{1}{\displaystyle\min_{\s_i\neq \s_j\in V_S}||\s_j-\s_i||}\log{\left(\frac{\displaystyle\sum_{i=1}^r ||\s'_i-\s_i||}{K_0}\right)},\end{eqnarray}
where\footnote{Note that $K_0$ is well-defined because $\mathcal{F}_{\mathcal{H}}$ is a hyperplane-generated polyhedral fan and $\text{span}\{\hat{\bn}\mid\hat{\bn}\in\hat{\mathcal{R}}\}=\mathbb{R}^n$, which implies that for any $\s_i\rightarrow\s'_i\in E$, there exists $\hat{\bn}\in\hat{\mathcal{R}}$ such that $(\s'_i-\s_i)\cdot\hat{\bn}<0$.}

\begin{eqnarray}
K_0=\epsilon^2\displaystyle\min\{-(\s'_i-\s_i)\cdot\hat{\bn}\mid \s_i\rightarrow\s'_i\in E, \hat{\bn}\in\hat{\mathcal{R}}\,\text{such that}\,(\s'_i-\s_i)\cdot\hat{\bn}<0\,\text{and}\,||\hat{\bn}||=1\},
\end{eqnarray}
then (\ref{eq:reaction_dynamics}) can be embedded into $\mathcal{T}_{\mathcal{F}_{\mathcal{H}},\delta}$.

Let us fix some arbitrary $\X=\log(\x)$. Recall from Equation (\ref{eq:useful_toric}) that the right-hand side of the toric differential inclusion $RHS(\mathcal{T}_{\mathcal{F}_{\mathcal{H}},\delta}(\X))$ is $\mathcal{P}^o$, where

\begin{eqnarray}
\mathcal{P}=\displaystyle\mathop{\bigcap}_{\substack{C\in\mathcal{F}_{\mathcal{H}}\\dist(\X,C)\leq\delta}}C.
\end{eqnarray}

\noindent If $dim(\mathcal{P})=0$, then $\mathcal{P}^o=\mathbb{R}^n$, so
\begin{eqnarray}
\frac{d\x}{dt}\bigg|_{\x=e^{\X}}\in RHS(\mathcal{T}_{\mathcal{F}_{\mathcal{H}},\delta}(\X)),
\end{eqnarray}
as desired. If $ dim(\mathcal{P})\geq 1$, we represent the cone $\mathcal{P}$ in terms of its one-dimensional generators, i.e.,
\begin{eqnarray} 
\mathcal{P}=\displaystyle\sum_{i=1}^{k} C_i,
\end{eqnarray}
\noindent for some $C_1,C_2,...,C_k\in\mathcal{R}$. Then, the right-hand side of the toric differential inclusion at $\X$ is given by

\begin{eqnarray}\label{eq:toric_differential_inclusion}
RHS(\mathcal{T}_{\mathcal{F}_{\mathcal{H}},\delta}(\X))= \left(\displaystyle\sum_{i=1}^{k} C_i\right)^o=\displaystyle\mathop{\bigcap}_{i=1}^{k}C^o_i. 
\end{eqnarray}

\noindent Choose a one-dimensional generator $C_i$ of $\mathcal{P}$ and consider the vector $\w\in C_i$ such that $||\w||=1$. Define 
\begin{eqnarray}
\sigma(\w)=\{\bM\in\mathbb{R}^n\mid dist(\bM,C)>\delta\ \text{for all}\ C\in\mathcal{F}_{\mathcal{H}}\ \text{such that}\ \w\notin C\}.
\end{eqnarray}

\begin{figure}[t!]
\centering
\includegraphics[scale=0.32]{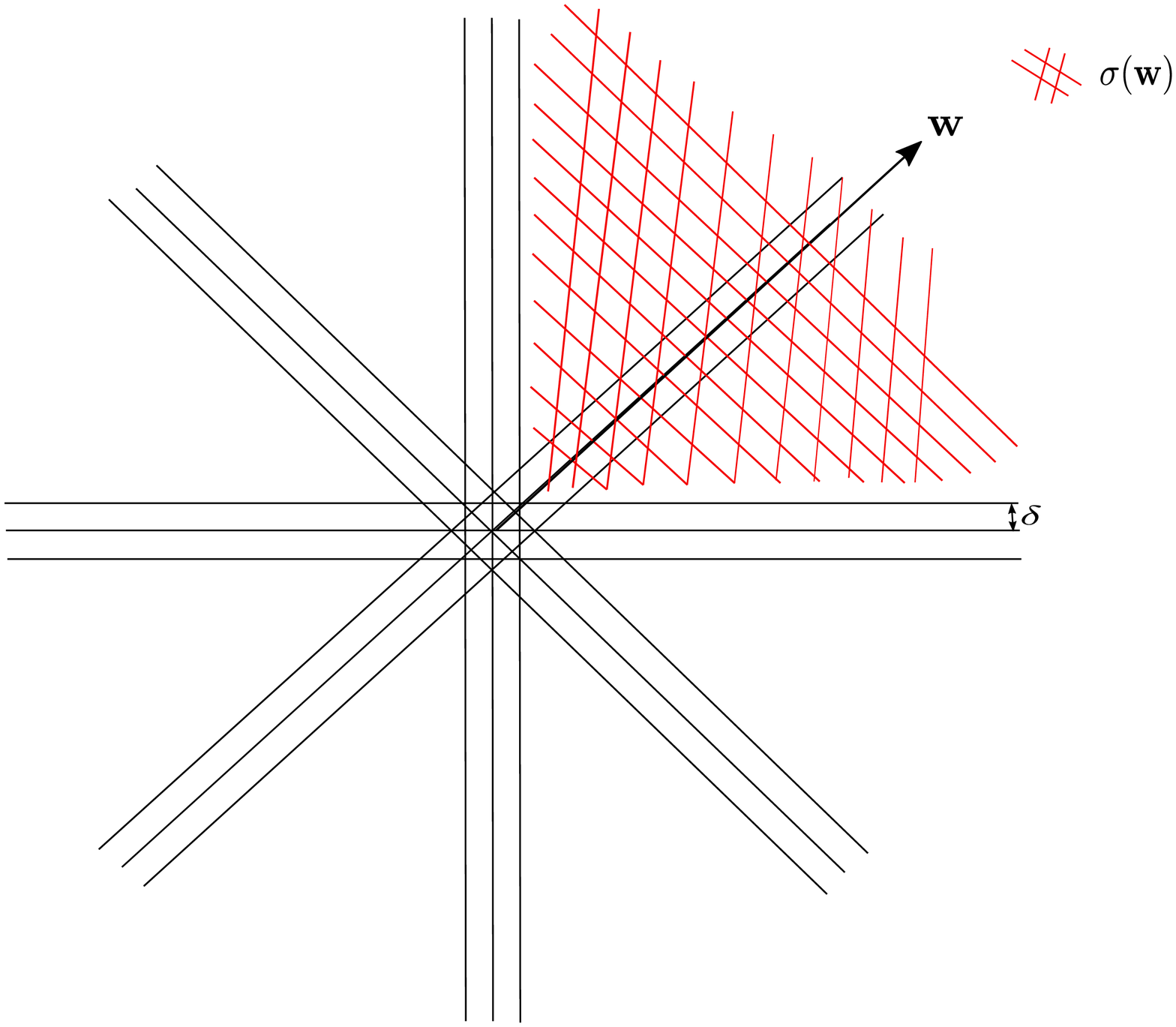}
\caption{\small Figure illustrating $\sigma(\w)$ in a hyperplane-generated polyhedral fan.}
\label{fig:strict_inequality_monomials}
\end{figure}

\noindent Refer to Figure~\ref{fig:strict_inequality_monomials} for an illustration of $\sigma(\w)$. Note that $\X\in\sigma(\w)$; indeed if $\X\notin\sigma(\w)$, then there exists a cone $\tilde{C}\in\mathcal{F}_{\mathcal{H}}$ not containing $\w$ such that $dist(\X,\tilde{C})\leq\delta$. Since 

\begin{eqnarray}
\w\in\mathcal{P}=\displaystyle\mathop{\bigcap}_{\substack{C\in\mathcal{F}_{\mathcal{H}}\\dist(\X,C)\leq\delta}}C,
\end{eqnarray}

\noindent this implies that $\w\in\tilde{C}$, a contradiction. 

Let $\bv_1,\bv_2,..., \bv_r$ be a reordering of $\s_1,\s_2,...,\s_r$ such that $(\bv_{i+1}-\bv_i)\cdot \w\leq 0 \, \text{for}\, 1\leq i\leq r-1$.\footnote{This reordering of source vertices $\s_1,\s_2,...,\s_r$ will be shown to imply important inequalities between monomials $\x^{\bv_i}$, when evaluated at $\x=e^{\X}$; see Equation (\ref{eq:imp_inequalities}).} Note that, since $\GG$ is endotactic, we cannot have $(\bv'_1-\bv_1)\cdot \w>0$, because then $(\bv'_1-\bv_1)\cdot(-\w)<0$, and this together with Definition~\ref{def:endotactic} would contradict the way we defined $\bv_1,\bv_2,...,\bv_r$. Therefore, we have $(\bv'_1-\bv_1)\cdot\w\leq 0$. Now iterate through vertices $\bv_1,\bv_2,...,\bv_r$ in the increasing order of their index. Let $\bv_j$ be the first vertex such that there exists an edge $\bv_j\rightarrow\bv'_j\in E$ such that $(\bv'_j-\bv_j)\cdot\w<0$. If no such $\bv_j$ exists, then since $\GG$ is endotactic, all edge vectors of $\GG$ are in the hyperplane given by the boundary of $C^o_i$ and therefore the right-hand side of (\ref{eq:reaction_dynamics}) lies in the half-space $C^o_i$. Else, we will show that the right-hand side of (\ref{eq:reaction_dynamics}) still lies in the half-space $C^o_i$. Let $\bv_{j+1},\bv_{j+2}... \bv_{j+s-1}$ be vertices such that $\w\cdot \bv_j = \w\cdot \bv_{j+1}= ... = \w\cdot \bv_{j+s-1}$ for some $s\geq 1$. Then, since $\GG$ is endotactic, the edge vectors with source vertices $\bv_1,\bv_2,...,\bv_{j+s-1}$ all lie in $C^o_i$. Rewriting (\ref{eq:reaction_dynamics}) in terms of the reordered vertices at $\x=e^{\X}$, we get\footnote{Note that $k'_i(t)$ refers to the rate constant of the edge $\bv_i\rightarrow\bv'_i$, where the source vertex $\bv_i$ is obtained by reordering the vertices $\s_1,\s_2,...\s_r$. Note also that, in this proof, the notation $\frac{d\x}{dt}\bigg|_{\x=e^{\X}}$ refers to the RHS of Equation (\ref{eq:reaction_dynamics}) evaluated at $\x=e^{\X}$.}

\begin{eqnarray}\label{eq:permuted_reaction_dynamics}
\begin{aligned}
\frac{d\x}{dt}\bigg|_{\x=e^{\X}} &= \displaystyle\sum_{i=1}^r k'_i(t)\x^{\bv_i}(\bv'_i-\bv_i) \\
			  &= \displaystyle\sum_{i=1}^{j-1} k'_i(t)\x^{\bv_i}(\bv'_i-\bv_i) + k'_j(t)\x^{\bv_j}(\bv'_j-\bv_j)  + \displaystyle\sum_{i=j+1}^{j+s-1}k'_i(t)\x^{\bv_i}(\bv'_i-\bv_i)\\
			  & + \displaystyle\sum_{i=j+s}^{r} k'_i(t)\x^{\bv_i}(\bv'_i-\bv_i).
\end{aligned}
\end{eqnarray}

\noindent Note that we have the following:
\begin{eqnarray}
\begin{split}
\displaystyle\sum_{i=1}^{j-1} k'_i(t)(\bv'_i-\bv_i)\cdot \w &=0, \\
k'_j(t)\x^{\bv_j}(\bv'_j-\bv_j)\cdot \w & < 0,\ \text{and} \\
\displaystyle\sum_{i=j+1}^{j+s-1}k'_i(t)\x^{\bv_i}(\bv'_i-\bv_i) \cdot \w &\leq 0. 
\end{split}
\end{eqnarray}

\noindent To show that the right-hand side of (\ref{eq:permuted_reaction_dynamics}) belongs to $C^o_i$, it therefore suffices to show that

\begin{eqnarray}
\left[k'_j(t)\x^{\bv_j}(\bv'_j-\bv_j) + \displaystyle\sum_{i=j+s}^{r} k'_i(t)\x^{\bv_i}(\bv'_i-\bv_i)\right]\cdot \w \leq 0.
\end{eqnarray}

\noindent Note that since $\w\cdot(\bv_j -\bv_l) >0$ for $j+s\leq l \leq r$, it follows that $\w$ does not lie on any of the hyperplanes orthogonal to the vectors $\bv_j-\bv_l$ and therefore $\w$ does not belong to any of the cones contained in these hyperplanes. Since $\X\in \sigma(\w)$, we have $dist(\X,C)>\delta$ for all cones $C$ in $\mathcal{F}_\mathcal{H}$ not containing $\w$. In particular, we have $dist(\X,C)>\delta$ for cones $C\in\mathcal{F}_\mathcal{H}$ that are contained in the hyperplanes mentioned above. Since these hyperplanes are a union of such cones, we have $dist(\X,H)>\delta$ for each hyperplane $H$ orthogonal to the vectors $\bv_j-\bv_l$. This implies that
\begin{eqnarray}
\X\cdot\frac{\bv_j-\bv_l}{||\bv_j-\bv_l||} > \delta = \frac{1}{\displaystyle\min_{\s_i\neq \s_j\in V_S}||\s_j-\s_i||}\log{\frac{\displaystyle\sum_{i=1}^r ||\s'_i-\s_i||}{K_0}}\geq \frac{1}{||\bv_j-\bv_l||}\log{\frac{\displaystyle\sum_{i=1}^r ||\bv'_i-\bv_i||}{-\epsilon^2(\bv'_j-\bv_j)\cdot \w}}.
\end{eqnarray}
Therefore, by exponentiating both sides, we obtain
\begin{eqnarray}\label{eq:imp_inequalities}
\x^{\bv_l}< \frac{-\epsilon^2(\bv'_j-\bv_j)\cdot \w}{\displaystyle\sum_{i=1}^r||\bv'_i-\bv_i||}\x^{\bv_j}\,\text{for all}\, j+s\leq l\leq r.
\end{eqnarray}

\noindent Now, by using $k'_j(t)\geq\epsilon$ and $k'_i(t)\leq\frac{1}{\epsilon}$ for $j+s\leq i\leq r$, we have:

\begin{eqnarray}
\begin{aligned}
\left[k'_j(t)\x^{\bv_j}(\bv'_j-\bv_j)+\displaystyle\sum_{i=j+s}^{r}k'_i(t)\x^{\bv_i}(\bv'_i-\bv_i)\right]\cdot \w & < k'_j(t)\x^{\bv_j}(\bv'_j-\bv_j)\cdot \w - \\
                                       &\frac{\epsilon^2(\bv'_j-\bv_j)\cdot \w}{\displaystyle\sum_{i=1}^r||\bv'_i-\bv_i||}\x^{\bv_j}\displaystyle\sum_{i=j+s}^{r}k'_i(t)(\bv'_i-\bv_i)\cdot \w\\
& \leq \epsilon\x^{\bv_j}\Bigg[(\bv'_j-\bv_j)\cdot\w- \\
&(\bv'_j-\bv_j)\cdot \w\frac{\displaystyle\sum_{i=1}^{r}||\bv'_i-\bv_i||}{\displaystyle\sum_{i=1}^r ||\bv'_i-\bv_i||}\Bigg] =0
\end{aligned}
\end{eqnarray}	

\noindent This implies that $\frac{d\x}{dt}\bigg|_{\x=e^{\X}}\in C^o_i$. Repeating this for each of the one-dimensional generators of $\mathcal{P}$ given by $C_1,C_2,... ,C_{k}$, we get
\begin{eqnarray}\label{eq:particular_inclusion}
\frac{d\x}{dt}\bigg|_{\x=e^{\X}}\in\displaystyle\mathop{\bigcap}_{i=1}^{k}C^o_i.
\end{eqnarray}
Since the choice of point $\X$ was arbitrary, using (\ref{eq:toric_differential_inclusion}) we get 
\begin{eqnarray}
\frac{d\x}{dt}\bigg|_{\x=e^{\X}}\in RHS(\mathcal{T}_{\mathcal{F}_{\mathcal{H}},\delta}(\X))
\end{eqnarray}
for all $\X\in\mathbb{R}^n$, as desired.

\bigskip\bigskip
\textit{Case 2}. Consider now the case when $\mathcal{J}\neq\RR^n$. In this case, the cones in the polyhedral fan $\mathcal{F}_{\mathcal{H}}$ are not pointed, but note that $C\cap\mathcal{J}$ is a pointed cone for all $C\in\mathcal{F}_{\mathcal{H}}$. Moreover, the orthogonal complement of $\mathcal{J}$ denoted by $\mathcal{J}^{\perp}$ is contained in all hyperplanes $(\s_i-\s_j)^{\perp}$. Further, since $\mathcal{J}$ is a linear subspace we have $\mathcal{J}^o=\mathcal{J}^{\perp}$. Therefore, we get
\begin{eqnarray}\label{eq:non_pointed_component}
C^o\subseteq\mathcal{J}\ \text{for all}\ C\in\mathcal{F}_{\mathcal{H}}.
\end{eqnarray}
Since $\GG$ is endotactic, we have
\begin{eqnarray}
\text{span}\{\s'_i-\s_i\mid \s_i\rightarrow\s'_i\in E\}\subseteq\mathcal{J}.
\end{eqnarray}
\noindent Now, proceed as in the previous case with 
\begin{eqnarray}
\mathcal{R}=\{C\cap\mathcal{J}\mid C\in\mathcal{F}_{\mathcal{H}}\ \text{and}\ dim(C\cap\mathcal{J})=1\}.
\end{eqnarray}
Fix $\X\in\mathbb{R}^n$. Let 
\begin{eqnarray}
\mathcal{P}=\displaystyle\mathop{\bigcap}_{\substack{C\in\mathcal{F}_{\mathcal{H}}\\dist(\X,C)\leq\delta}}C,
\end{eqnarray}
\noindent as in the previous case. We represent the pointed cone $\mathcal{P}\cap\mathcal{J}$ in terms of its one-dimensional generators, i.e.,
\begin{eqnarray}
\mathcal{P}\cap\mathcal{J}=\displaystyle\sum_{i=1}^{k} C_i.
\end{eqnarray}
Therefore, we get\footnote{For any two cones $C_1,C_2$, their Minkowksi sum is given by 
\begin{eqnarray*}
C_1 + C_2 = \{\x_1 + \x_2\in\mathbb{R}^n | \x_1\in C_1\ \text{and}\ \x_2\in C_2\}.
\end{eqnarray*}
}
\begin{eqnarray}
(\mathcal{P}\cap\mathcal{J})^o\cap\mathcal{J} = (\mathcal{P}^o + \mathcal{J}^o)\cap\mathcal{J}  = (\mathcal{P}^o + \mathcal{J}^{\perp})\cap\mathcal{J}.
\end{eqnarray}
Let $\bu\in (\mathcal{P}^o + \mathcal{J}^{\perp})\cap\mathcal{J}$. This means $\bu\in\mathcal{P}^o + \mathcal{J}^{\perp}$ and $\bu\in\mathcal{J}$. Using Equation (\ref{eq:non_pointed_component}), we have
\begin{eqnarray}
\bu = \hat{\bu} + \tilde{\bu},
\end{eqnarray}
where $\hat{\bu}\in\mathcal{J}$ and $\tilde{\bu}\in\mathcal{J}^{\perp}$. This implies that $\tilde{\bu}=0$ and using Equation (\ref{eq:non_pointed_component}) again, we get
\begin{eqnarray}
(\mathcal{P}^o + \mathcal{J}^{\perp})\cap\mathcal{J} = \mathcal{P}^o \cap\mathcal{J} = \mathcal{P}^o.
\end{eqnarray}
Therefore, 
\begin{eqnarray}\label{eq:modified_toric_differential_inclusion}
\mathcal{P}^o = (\mathcal{P}\cap\mathcal{J})^o\cap\mathcal{J} =\left(\displaystyle\sum_{i=1}^{k} C_i\right)^o\cap\mathcal{J}=\displaystyle\mathop{\bigcap}_{i=1}^{k}C^o_i\cap\mathcal{J}. 
\end{eqnarray}
By proceeding as in the previous case, we obtain
\begin{eqnarray}
\frac{d\x}{dt}\bigg|_{\x=e^{\X}}\in\displaystyle\mathop{\bigcap}_{i=1}^{k}C^o_i.
\end{eqnarray}
Note that we also have 
\begin{eqnarray}
\frac{d\x}{dt}\bigg|_{\x=e^{\X}}\in\text{span}\{\s'_i-\s_i\mid \s_i\rightarrow\s'_i\in E\}\subseteq\mathcal{J}.
\end{eqnarray}
\noindent Therefore, we obtain
\begin{eqnarray}\label{eq:second_last}
\frac{d\x}{dt}\bigg|_{\x=e^{\X}}\in\displaystyle\mathop{\bigcap}_{i=1}^{k}C^o_i\cap\mathcal{J}.
\end{eqnarray}
Recall that $RHS(\mathcal{T}_{\mathcal{F}_{\mathcal{H}},\delta}(\X))= \mathcal{P}^o$. Since the choice of point $\X$ was arbitrary, using Equations (\ref{eq:modified_toric_differential_inclusion}) and (\ref{eq:second_last}), we get
\begin{eqnarray}
\frac{d\x}{dt}\bigg|_{\x=e^{\X}}\in RHS(\mathcal{T}_{\mathcal{F}_{\mathcal{H}},\delta}(\X)),
\end{eqnarray}
for all $\X\in\mathbb{R}^n$, as desired.

\end{proof}

\section{A converse of Theorem~\ref{thm:embed_endotactic_tdi}}\label{sec:converse_endotactic_TDI}

In this section, we show a converse of Theorem~\ref{thm:embed_endotactic_tdi}. More precisely, we prove that if an E-graph $\GG=(V,E)$ is not endotactic, then there exists a variable-$k$ dynamical system generated by $\GG$ that cannot be embedded into any toric differential inclusion. The key observation is that since $\GG$ is not endotactic, there exists a one-dimensional projection of $\GG$ along which there is a ``bad" edge corresponding to the most dominant source monomial that ``points outwards". There might be other ``good" edges corresponding to the most dominant source monomials that do point inwards. But we can choose the rate constants $k_i(t)$ of these ``good" edges arbitrarily small by choosing a sufficiently small $\epsilon$. This forces the dynamics to lie outside the cone corresponding to the right-hand side of the toric differential inclusion and the desired result follows.

\begin{theorem}\label{thm:converse_endotactic}
Consider an E-graph $\GG=(V,E)$ such that any variable-$k$ dynamical system generated by $\GG$ can be embedded into a toric differential inclusion. Then $\GG$ is endotactic.
\end{theorem}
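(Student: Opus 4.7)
The plan is to argue by contrapositive: if $\GG$ is not endotactic, I exhibit a variable-$k$ dynamical system generated by $\GG$ that cannot be embedded into any toric differential inclusion. The key idea mirrors the proof of Theorem~\ref{thm:embed_endotactic_tdi}: non-endotacticity produces a direction $\w$ along which a ``bad'' edge dominates the dynamics too strongly for any TDI to contain it.

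First, I negate Definition~\ref{def:endotactic} to extract $\w\in\RR^n$ (normalized to $\|\w\|=1$) and an edge $\s_i\rightarrow\s_i'\in E$ with $\w\cdot(\s_i'-\s_i)<0$ such that every edge $\s_j\rightarrow\s_j'$ with $\w\cdot(\s_j'-\s_j)>0$ satisfies $\w\cdot\s_j\geq \w\cdot\s_i$. Write $c_0 := \w\cdot\s_i$, $a := -\w\cdot(\s_i'-\s_i)>0$, $L := \max_j|\w\cdot(\s_j'-\s_j)|$, and $r := |E|$. I then construct a variable-$k$ system using constant rate constants: $k_i = 1/\epsilon$ on the bad edge $\s_i\rightarrow\s_i'$; $k_j = \epsilon$ on every ``good'' edge (those with $\w\cdot(\s_j'-\s_j)>0$); any value in $[\epsilon,1/\epsilon]$ elsewhere. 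Pick $\epsilon$ small enough that $\epsilon^2 r L < a$. Evaluate the RHS of~(\ref{eq:mass_action_dynamics}) along the path $\X = -t\w$ (so $\x = e^{-t\w}$ and $\x^{\s_j} = e^{-t\w\cdot\s_j}$) and compute its $\w$-component. The bad edge contributes $-a\,e^{-tc_0}/\epsilon$; each good edge contributes at most $\epsilon L\,e^{-t\w\cdot\s_j}\leq \epsilon L\,e^{-tc_0}$, where the inequality uses the extracted property $\w\cdot\s_j\geq c_0$; every remaining edge (neutral or other bad) contributes nonpositively to the $\w$-component. Summing,
\[
\w\cdot\frac{d\x}{dt}\bigg|_{\x=e^{-t\w}} \,\leq\, \frac{e^{-tc_0}}{\epsilon}\bigl(\epsilon^2 r L - a\bigr) \,<\, 0
\]
uniformly in $t\geq 0$.

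Next, I show this estimate is incompatible with embedding into any $\mathcal{T}_{\mathcal{F},\delta}$. Fix an arbitrary complete polyhedral fan $\mathcal{F}$ and $\delta>0$. For any polyhedral cone $C\in\mathcal{F}$, the distance $dist(-t\w,C)$ either vanishes (if $-\w\in C$) or grows linearly in $t$ (otherwise). Hence for all $t$ beyond a threshold depending on $\mathcal{F}$ and $\delta$, every cone with $dist(-t\w,C)\leq\delta$ contains $-\w$, so the cone $\mathcal{P}$ defined in~(\ref{eq:useful_toric}) also contains $-\w$. This forces $RHS(\mathcal{T}_{\mathcal{F},\delta}(-t\w)) = \mathcal{P}^o\subseteq\{\z:\z\cdot(-\w)\leq 0\}$. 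But the estimate above gives $(-\w)\cdot d\x/dt>0$ at this point, so $d\x/dt\notin\mathcal{P}^o$ and the chosen dynamical system fails to embed into $\mathcal{T}_{\mathcal{F},\delta}$. Since $\mathcal{F}$ and $\delta$ were arbitrary, no TDI can accommodate it, completing the proof.

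The main obstacle is the monomial bookkeeping that underlies the displayed inequality: a priori, edges whose source $\s_k$ satisfies $\w\cdot\s_k<c_0$ have monomials larger than $e^{-tc_0}$ and could swamp the bad edge's contribution to the $\w$-component. The non-endotactic hypothesis is precisely what rules this out, since any edge with positive $\w$-component must come from a source with $\w\cdot\s_j\geq c_0$; thus the dominant monomials only appear on bad or neutral edges, whose $\w$-components are nonpositive and cannot cancel the bad edge's negative contribution. Marrying this source-level estimate with the asymptotic observation that every cone close to $-t\w$ must contain $-\w$ is the heart of the argument; tuning the rate constants via $\epsilon$ is straightforward once this structural observation is in hand.
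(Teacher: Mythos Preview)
Your proof is correct and takes essentially the same approach as the paper: argue by contrapositive, travel far along the ray in direction $-\w$ so that the right-hand side of any toric differential inclusion at that point is contained in the half-space $\{\z:(-\w)\cdot\z\leq 0\}$, and use non-endotacticity together with a suitable choice of rate constants to force $d\x/dt$ outside this half-space. Your good/bad/neutral edge classification yields slightly cleaner monomial bookkeeping than the paper's reordering-and-splitting argument (the paper orders sources by their $\w'$-projection and separately handles edges sharing the bad edge's projected value and those with strictly smaller projection), but the core ideas are identical.
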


\begin{proof}
We shall show that if $\GG=(V,E)$ is not an endotactic E-graph, then there exists $\epsilon>0$ such that a variable-$k$ system generated by $\GG$ cannot be embedded into a toric differential inclusion $\mathcal{T}_{\mathcal{F},\delta}(\X)$ for any complete polyhderal fan $\mathcal{F}$ and any $\delta>0$. 

Let $|E|=r$ and $\s_1\rightarrow \s'_1,\s_2\rightarrow\s'_2,...,\s_r\rightarrow\s'_r$ denote the edges in $E$. If $\GG$ is not an endotactic E-graph, then there exists a $\w\in\RR^m$ and $\s_i\rightarrow\s'_i\in E$ such that $\w\cdot (\s'_i-\s_i)<0$ and $\w\cdot (\s_j-\s_i)\geq 0$ for every $(\s_j,\s'_j)\in E$ satisfying $\w\cdot \s_j\neq \w\cdot\s'_j$. Let $\w'=-\w$ and

\begin{eqnarray}
\epsilon=\sqrt{\frac{\min\{(\s'_i-\s_i)\cdot \w'\mid  \s_i\rightarrow\s'_i\in E,(\s'_i-\s_i)\cdot \w'>0\}}{2||\w'||\displaystyle\sum_{i=1}^{r}||\s'_i-\s_i||}}.
\end{eqnarray}

Let $V_S$ denote the set of source vertices in $\GG=(V,E)$. Let $\bv_1,\bv_2,..., \bv_r$ be a reordering of the vertices $\s_1,\s_2,...,\s_r$ such that $(\bv_{i+1}-\bv_i)\cdot \w'\leq 0 \,\text{for} \, 1\leq i\leq r-1$. Now iterate through vertices $\bv_1,\bv_2,...,\bv_r$ in the increasing order of their index. Let $\bv_j$ be the first vertex such that there exists an edge $\w'\cdot(\bv'_j - \bv_j)>0$. Due to our choice of $\w$, such a $\bv_j$ always exists. Let $\bv_{j+1},\bv_{j+2}... \bv_{j+s-1}$ be vertices such that $\w'\cdot \bv_j = \w'\cdot \bv_{j+1}=... = \w'\cdot \bv_{j+s-1}$ for some $s\geq 1$. Define 

\begin{eqnarray}
ray(\w')=\{\X\in\RR^n\mid \X=\lambda \w'\,\text{for some}\,\lambda\in\RR_{\geq 0}\}.
\end{eqnarray}
A variable-$k$ dynamical system generated by $\GG$ can be written as
\begin{eqnarray}\label{eq:converse_reaction_dynamics}
\frac{d\x}{dt}=\displaystyle\sum_{i=1}^r k_i(t)\x^{\s_i}(\s'_i-\s_i),
\end{eqnarray}
where $\x\in\RR^n_{>0}$ and $\epsilon\leq k_i(t)\leq\frac{1}{\epsilon}$. Consider a toric differential inclusion $\mathcal{T}_{\mathcal{F},\delta}(\X)$ generated by a polyhedral fan $\mathcal{F}$ and $\delta>0$. We shall show that there exists $\X_0\in ray(\w')$ such that $\frac{d\x}{dt}\bigg|_{\x=e^{\X_0}}\notin F_{\mathcal{F},\delta}(\X_0)$. Towards this, we will prove that there exist $\X_0\in ray(\w')$ such that (see Figure~\ref{fig:not_embedded_toric}):
\begin{eqnarray}
F_{\mathcal{F},\delta}(\X_0)\subseteq {\{ray(\w')\}}^o\ \text{and}\ \frac{d\x}{dt}\bigg|_{\x=e^{\X_0}}\notin ray(\w')^o.
\end{eqnarray}

We first show that $F_{\mathcal{F},\delta}(\X_0)\subseteq {\{ray(\w')\}}^o$. Let us denote $C_{\w'}$ to be the intersection of all the cones in the polyhedral fan containing $\w'$, i.e., $C_{\w'}=\displaystyle\mathop{\bigcap}_{\substack{C\in\mathcal{F}\\ \w'\in C}}C$. Now choose $\X_0\in ray(\w')$ sufficiently far away from the origin so that 
\begin{eqnarray}
dist(\X_0,C)>\max\left(\delta,\,\frac{1}{\displaystyle\min_{\bv_i\neq \bv_j\in V_S}||\bv_j-\bv_i||}\log\left(\frac{2||\w'||\displaystyle\sum_{i=1}^r ||\bv'_i-\bv_i||}{\epsilon^2(\bv'_j-\bv_j)\cdot \w'}\right)\right)
\end{eqnarray}

\noindent for every cone $C$ not\footnote{Such an $\X_0$ does exist because the distance between $ray(\w')$ and each cone $C$ not containing $w'$ within the unit sphere $S^n$ is strictly positive.} containing $\w'$. Note that 

\begin{eqnarray}
F_{\mathcal{F},\delta}(\X_0)=\left({\displaystyle\bigcap_{\substack{C\in\mathcal{F}\\dist(\X_0,C)\leq\delta}} C}\right)^o = \left(
\left({\displaystyle\bigcap_{\substack{C\in\mathcal{F}\\dist(\X_0,C)\leq\delta \\ w'\in C}} C}
\right)
\bigcap
\left(
{\displaystyle\bigcap_{\substack{C\in\mathcal{F}\\dist(\X_0,C)\leq\delta \\ w'\notin C}} C}\right) 
\right)^o.
\end{eqnarray}

But since $dist(\X_0,C)> \delta$ for every cone $C\in\mathcal{F}$ not containing $w'$, we have that the right-hand side of the toric differential inclusion at $\X_0$ is 

\begin{eqnarray}
F_{\mathcal{F},\delta}(\X_0)=\left(\displaystyle\mathop{\bigcap}_{\substack{C\in\mathcal{F}\\ \w'\in C}}C\right)^o=C^o_{\w'}.
\end{eqnarray}

\noindent Since $ray(\w')\subseteq C_{\w'}$, we get
\begin{eqnarray}
F_{\mathcal{F},\delta}(\X_0)=C^o_{\w'}\subseteq {\{ray(\w')\}}^o,
\end{eqnarray}
as desired. 

\begin{figure}[t!]
\centering
\includegraphics[scale=0.4]{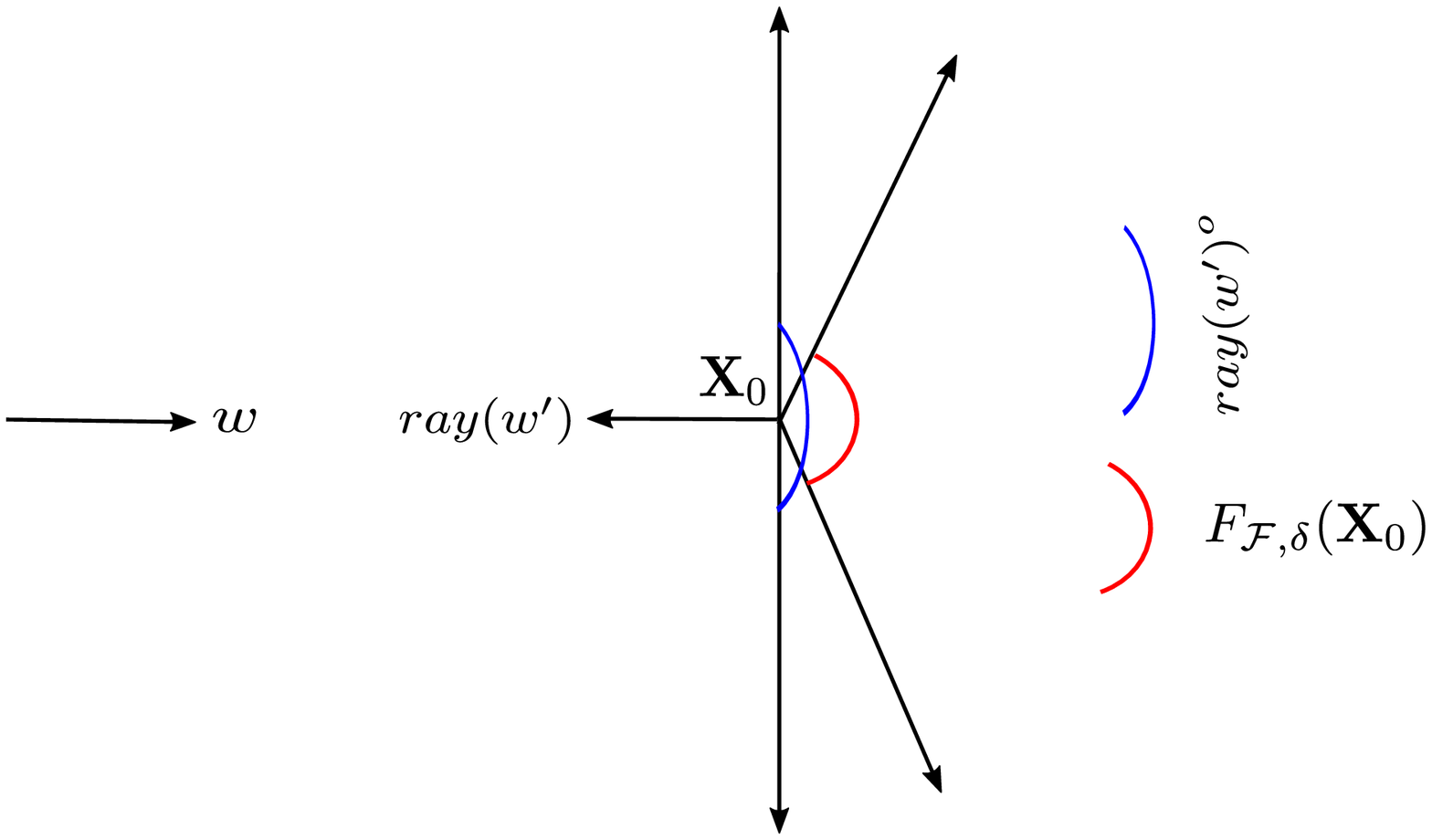}
\caption{\small Illustration of the fact that to show $\frac{d\x}{dt}\bigg|_{\x=e^{\X_0}}\notin F_{\mathcal{F},\delta}(\X_0)$, it suffices to show $\frac{d\x}{dt}\bigg|_{\x=e^{\X_0}}\notin {ray(\w')}^o$.
}
\label{fig:not_embedded_toric}     
\end{figure} 
   
\bigskip We now show that $\frac{d\x}{dt}\bigg|_{\x=e^{\X_0}}\notin {\{ray(\w')\}}^o$ or equivalently $\frac{d\x}{dt}\bigg|_{\x=e^{\X_0}}\cdot \w'>0$. Define $\x_0=e^{\X_0}$. Note that since $\X_0\in ray(\w')$, we have 
\begin{eqnarray}
\x^{\bv_1}_0 \geq\x^{\bv_2}_0 \geq ... > \x^{\bv_j}_0=\x^{\bv_{j+1}}_0 ... =\x^{\bv_{j+s-1}}_0 \geq \x^{\bv_{j+s}}_0 \geq ... \geq\x^{\bv_{r}}_0.
\end{eqnarray}
Rewriting (\ref{eq:converse_reaction_dynamics}) in terms of the reordered vertices at $\x_0$, we get

\begin{eqnarray}\label{eq:permuted_reaction_dynamics_converse}
\begin{aligned}
\frac{d\x}{dt}\bigg|_{\x=e^{\X_0}} &= \displaystyle\sum_{i=1}^r k'_i(t)\x_0^{\bv_i}(\bv'_i-\bv_i) \\
			    &= \displaystyle\sum_{i=1}^{j-1} k'_i(t)\x_0^{\bv_i}(\bv'_i-\bv_i) + k'_j(t)\x_0^{\bv_j}(\bv'_j-\bv_j)  + \x_0^{\bv_j}\displaystyle\sum_{i=j+1}^{j+s-1}k'_i(t)(\bv'_i-\bv_i) + \displaystyle\sum_{i=j+s}^{r} k'_i(t)\x_0^{\bv_i}(\bv'_i-\bv_i) \\
		        &= \displaystyle\sum_{i=1}^{j-1} k'_i(t)\x_0^{\bv_i}(\bv'_i-\bv_i) + \x_0^{\bv_j}\left[k'_j(t)(\bv'_j-\bv_j)  + \displaystyle\sum_{i=j+1}^{j+s-1}k'_i(t)(\bv'_i-\bv_i)\right] + \displaystyle\sum_{i=j+s}^{r} k'_i(t)\x_0^{\bv_i}(\bv'_i-\bv_i) \\ 
		        &= \displaystyle\sum_{i=1}^{j-1} k'_i(t)\x_0^{\bv_i}(\bv'_i-\bv_i) + \x_0^{\bv_j}\left[\frac{k'_j(t)}{2}(\bv'_j-\bv_j)  + \displaystyle\sum_{i=j+1}^{j+s-1}k'_i(t)(\bv'_i-\bv_i)\right] + \x_0^{\bv_j}\frac{k'_j(t)}{2}(\bv'_j-\bv_j) \\
		        &+ \displaystyle\sum_{i=j+s}^{r} k'_i(t)\x_0^{\bv_i}(\bv'_i-\bv_i).
\end{aligned}
\end{eqnarray}

\noindent Note that, due to the way we chose the index $j$, we have $(\bv'_i-\bv_i)\cdot\w'=0$ for all $1\leq i < j$, and therefore

\begin{eqnarray}
\begin{split}
\displaystyle\sum_{i=1}^{j-1} k'_i(t)\x_0^{\bv_i}(\bv'_i-\bv_i)\cdot \w'& =0\ \text{and} \\
k'_j(t)\x_0^{\bv_j}(\bv'_j-\bv_j)\cdot \w'& >0.
\end{split}
\end{eqnarray}

\noindent Then, to show that $\frac{d\x}{dt}\bigg|_{\x=e^{\X_0}}\cdot \w' >0$, it suffices to show the following
\begin{eqnarray}\label{eq:39}
\left[\frac{k'_j(t)}{2}(\bv'_j-\bv_j)  + \displaystyle\sum_{i=j+1}^{j+s-1}k'_i(t)(\bv'_i-\bv_i)\right]\cdot \w'\geq 0\ \text{and} 
\end{eqnarray}
\begin{eqnarray}\label{eq:40}
\left[\x_0^{\bv_j}\frac{k'_j(t)}{2}(\bv'_j-\bv_j) + \displaystyle\sum_{i=j+s}^{r}k'_i(t)\x_0^{\bv_i}(\bv'_i-\bv_i)\right]\cdot \w'>0.
\end{eqnarray}
We first show (\ref{eq:39}). Towards this, we have
\begin{eqnarray}
\left[\frac{k'_j(t)}{2}(\bv'_j-\bv_j)  + \displaystyle\sum_{i=j+1}^{j+s-1}k'_i(t)(\bv'_i-\bv_i)\right]\cdot \w'\geq \frac{k'_j(t)}{2}(\bv'_j-\bv_j)\cdot \w' -||\w'||\displaystyle\sum_{i=j+1}^{j+s-1}k'_i(t)||\bv'_i-\bv_i ||. 
\end{eqnarray}
\noindent We choose $k'_j(t)=\frac{1}{\epsilon}$ and $k'_i(t)=\epsilon$ for all $t\geq 0$ and for every $j+1\leq i\leq j+s-1$. Therefore, we get
\begin{eqnarray}
\begin{aligned}
\left[\frac{k'_j(t)}{2}(\bv'_j-\bv_j)  + \displaystyle\sum_{i=j+1}^{j+s-1}k'_i(t)(\bv'_i-\bv_i)\right]\cdot \w' & \geq \frac{1}{2\epsilon}(\bv'_j-\bv_j)\cdot \w' - \epsilon||\w'||\displaystyle\sum_{i=j+1}^{j+s-1}||\bv'_i-\bv_i|| \\
& \geq \frac{1}{2\epsilon}(\bv'_j-\bv_j)\cdot \w' - \epsilon||\w'||\displaystyle\sum_{i=1}^{r}||\bv'_i-\bv_i|| \\
& \geq \frac{1}{2\epsilon}\left[(\bv'_j-\bv_j)\cdot \w' - 2\epsilon^2||\w'||\displaystyle\sum_{i=1}^{r}||\bv'_i-\bv_i||\right] \\
& = \frac{1}{2\epsilon}\Bigg[(\bv'_j-\bv_j)\cdot \w' - \min\{(\s'_i-\s_i)\cdot \w' \mid \\
&  \s_i\rightarrow\s'_i\in E,(\s'_i-\s_i)\cdot \w'>0\}\Bigg]\geq 0.
\end{aligned}
\end{eqnarray}

\bigskip We now proceed to showing (\ref{eq:40}). Note that since $\w'\cdot \bv_j > \w'\cdot \bv_l$ for $j+s\leq l \leq r$, $\w'$ does not lie on any of the hyperplanes orthogonal to the vectors $\bv_j-\bv_l$. In addition, note that

\begin{eqnarray}
dist(\X_0,C)>\frac{1}{\displaystyle\min_{\bv_i\neq \bv_j\in V_S}||\bv_j-\bv_i||}\log{\frac{2||\w'||\displaystyle\sum_{i=1}^r ||\bv'_i-\bv_i||}{\epsilon^2(\bv'_j-\bv_j)\cdot \w'}}
\end{eqnarray}

\noindent for cones $C\in\mathcal{F}_\mathcal{H}$ not containing $\w'$ that are contained in the hyperplanes mentioned above. Like in the proof of Theorem~\ref{thm:embed_endotactic_tdi}, we have $dist(\X_0,H)>\delta$ for each hyperplane $H$ orthogonal to the vectors $\bv_j-\bv_l$. This means:
\begin{eqnarray}
\X_0\cdot\frac{\bv_j-\bv_l}{||\bv_j-\bv_l||} > \frac{1}{\displaystyle\min_{\bv_i\neq \bv_j\in V_S}||\bv_j-\bv_i||}\log{\frac{2||\w'||\displaystyle\sum_{i=1}^r ||\bv'_i-\bv_i||}{\epsilon^2(\bv'_j-\bv_j)\cdot \w'}} \geq \frac{1}{||\bv_j-\bv_l||}\log{\frac{2||\w'||\displaystyle\sum_{i=1}^r ||\bv'_i-\bv_i||}{\epsilon^2(\bv'_j-\bv_j)\cdot \w'}}.
\end{eqnarray}

\noindent This implies 
\begin{eqnarray}
\x_0^{\bv_l}< \frac{\epsilon^2(\bv'_j-\bv_j)\cdot \w'}{2||\w'||\displaystyle\sum_{i=1}^r||\bv'_i-\bv_i||}\x_0^{\bv_j}\ \text{for all}\ j+s\leq l\leq r.
\end{eqnarray}

\noindent We now have 

\begin{eqnarray}
\begin{aligned}
\left[\frac{k'_j(t)}{2}\x_0^{\bv_j}(\bv'_j-\bv_j)+\displaystyle\sum_{i=j+s}^{r}k'_i(t)\x_0^{\bv_i}(\bv'_i-\bv_i)\right]\cdot \w' &> \frac{k'_j(t)}{2}\x_0^{\bv_j}(\bv'_j-\bv_j)\cdot \w' -\\
& \frac{\epsilon^2(\bv'_j-\bv_j)\cdot \w'}{2\displaystyle\sum_{i=1}^r||\bv'_i-\bv_i||}\x_0^{\bv_j}\displaystyle\sum_{i=1}^{r}k'_i(t)||\bv'_i-\bv_i|| \\
& \geq\frac{\epsilon \x_0^{\bv_j}}{2}\left[(\bv'_j-\bv_j)\cdot \w'-(\bv'_j-\bv_j)\cdot \w'\right]=0.
\end{aligned}
\end{eqnarray}

\noindent This implies that $\frac{d\x_0}{dt}\notin F_{\mathcal{F},\delta}(\X_0)$. Therefore, for this choice of $\epsilon$, the variable-$k$ system discussed above cannot be embedded into any toric differential inclusion.
\end{proof}

Note that in the proof of Theorem~\ref{thm:converse_endotactic}, we have \textit{not} used the full strength of the hypothesis about any \textit{variable-$k$} system generated by $\GG$. Exactly the same proof still works under the hypothesis of \textit{fixed} but arbitrary $k$. Therefore we obtain the following.

\begin{theorem}\label{thm:stronger_converse}
Consider an E-graph $\GG=(V,E)$ such that for any positive vector of rate constants $k$, the dynamical system generated by $\GG$ and $k$ can be embedded into a toric differential inclusion. Then $\GG$ is endotactic.
\end{theorem}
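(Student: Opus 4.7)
The plan is to prove the contrapositive and essentially recycle the construction from the proof of Theorem~\ref{thm:converse_endotactic}, observing that the rate constants chosen there are in fact constant in time. Suppose $\GG=(V,E)$ is not endotactic. Then as in the previous proof there exists $\w\in\RR^n$ and an edge $\s_i\to \s_i'\in E$ with $\w\cdot(\s_i'-\s_i)<0$ such that, writing $\w'=-\w$, every edge $\bv\to\bv'$ with $\w'\cdot(\bv'-\bv)>0$ has $\w'\cdot \bv \le \w'\cdot \bv_{\max}$, where $\bv_j$ is the source vertex maximizing $\w'\cdot \bv$ over sources of such "bad'' edges. I will exhibit a single positive vector of rate constants for which no toric differential inclusion can contain the resulting dynamics.

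First I would fix $\epsilon>0$ exactly as defined in the proof of Theorem~\ref{thm:converse_endotactic}, namely
\begin{equation*}
\epsilon=\sqrt{\frac{\min\{(\s_i'-\s_i)\cdot \w'\mid \s_i\to\s_i'\in E,\ (\s_i'-\s_i)\cdot \w'>0\}}{2\|\w'\|\sum_{i=1}^r\|\s_i'-\s_i\|}},
\end{equation*}
reorder the sources so $(\bv_{i+1}-\bv_i)\cdot\w'\le 0$, and let $j$ be the first index with some edge $\bv_j\to\bv_j'$ satisfying $\w'\cdot(\bv_j'-\bv_j)>0$, together with the block $\bv_{j+1},\dots,\bv_{j+s-1}$ of sources at the same $\w'$-level. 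I then define the fixed rate constants by $k_j:=1/\epsilon$ for the distinguished bad edge out of $\bv_j$, $k_i:=\epsilon$ for every edge whose source lies in $\{\bv_{j+1},\dots,\bv_{j+s-1}\}$ or in $\{\bv_{j+s},\dots,\bv_r\}$, and any positive value (say $k_i=1$) for edges with source among $\bv_1,\dots,\bv_{j-1}$. These constants are independent of $t$, so they constitute a legitimate positive vector of rate constants.

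Next, given any complete polyhedral fan $\mathcal{F}$ and any $\delta>0$, I would pick $\X_0\in \mathrm{ray}(\w')$ far enough from the origin that, for every cone $C\in\mathcal{F}$ not containing $\w'$,
\begin{equation*}
\mathrm{dist}(\X_0,C)>\max\!\left(\delta,\ \frac{1}{\min_{\bv_i\ne \bv_j\in V_S}\|\bv_j-\bv_i\|}\log\!\frac{2\|\w'\|\sum_i\|\bv_i'-\bv_i\|}{\epsilon^2(\bv_j'-\bv_j)\cdot\w'}\right).
\end{equation*}
The first inequality forces $F_{\mathcal{F},\delta}(\X_0)=C_{\w'}^o\subseteq\{\mathrm{ray}(\w')\}^o$ as in the previous proof, so to rule out embedding it suffices to verify that $\tfrac{d\x}{dt}\big|_{\x=e^{\X_0}}\cdot \w'>0$. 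The second inequality, after exponentiation, gives $\x_0^{\bv_l}< \frac{\epsilon^2(\bv_j'-\bv_j)\cdot \w'}{2\|\w'\|\sum_i\|\bv_i'-\bv_i\|}\x_0^{\bv_j}$ for all $l\ge j+s$.

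Finally I would split the right-hand side of \eqref{eq:converse_reaction_dynamics} in the same four groups as in \eqref{eq:permuted_reaction_dynamics_converse}. The $i<j$ sum contributes $0$ to the inner product with $\w'$ by the choice of $j$. The $\tfrac{k_j}{2}$ half of the bad edge combined with the "tie'' block $i\in[j+1,j+s-1]$ yields a nonnegative contribution by exactly the calculation of \eqref{eq:39}, using $k_j=1/\epsilon$, $k_i=\epsilon$, and the definition of $\epsilon$. The remaining $\tfrac{k_j}{2}$ half combined with the $i\ge j+s$ tail yields a strictly positive contribution by the calculation leading to \eqref{eq:40}, using the monomial inequality above together with $k_i=\epsilon$. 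Summing, $\tfrac{d\x}{dt}\big|_{\x=e^{\X_0}}\cdot\w'>0$, so the trajectory escapes $F_{\mathcal{F},\delta}(\X_0)$, and since $\mathcal{F}$ and $\delta$ were arbitrary, no toric differential inclusion embeds the system. There is no real obstacle here: the only thing to check is that the constants chosen in the previous proof are already time-independent, so every step transports verbatim to the fixed-$k$ regime.
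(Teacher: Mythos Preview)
Your proposal is correct and is exactly the approach the paper takes: the paper simply observes that in the proof of Theorem~\ref{thm:converse_endotactic} the chosen rate constants $k'_j(t)=1/\epsilon$ and $k'_i(t)=\epsilon$ were already constant in time, so the very same construction furnishes a single fixed positive rate vector for which no toric differential inclusion can embed the dynamics. Your write-up merely unpacks this observation in more detail (and harmlessly specifies values like $k_i=1$ for $i<j$, where the contribution to $\w'$ vanishes anyway).
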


In Theorem~\ref{thm:converse_endotactic}, it is essential that the hypothesis holds for any variable-$k$ dynamical system generated by $\GG$. Let us note that there exist variable-$k$ dynamical systems that are not endotactic, but can still be embedded into a toric differential inclusion: 

\begin{example}
Consider the following E-graph: $X\xrightarrow{k_1(t)}\phi, X\xrightleftharpoons[k_3(t)]{k_2(t)} 3X$. Let $\frac{1}{\sqrt{2}}<\epsilon< 1$. Note that this E-graph is not endotactic due to the edge $X\rightarrow\phi$. We will show that one can embed the dynamical system generated by this network into the toric differential inclusion $\mathcal{T}_{\mathcal{F},\delta}$ with
\begin{eqnarray}
\delta=\max\left(\log{\frac{1}{\epsilon}},\frac{1}{2}\log\left(\frac{2\epsilon}{2\epsilon^2-1}\right)\right) 
\end{eqnarray}
and $\mathcal{F}$ is the only non-trivial polyhedral fan in one-dimension, i.e., 
\begin{eqnarray}
\mathcal{F}=\{{0},\RR_{\geq 0},\RR_{\leq 0}\}.
\end{eqnarray}

Note that 
\begin{eqnarray}
\frac{dx}{dt}=-k_1(t)x+2k_2(t)x-2k_3(t)x^{3}.
\end{eqnarray}
For $x>e^{\delta}$, we have
\begin{eqnarray}
\frac{dx}{dt}\leq -\epsilon x + 2x\left(\frac{1}{\epsilon} - \epsilon x^2\right) < -\epsilon x + 2x\left(\frac{1}{\epsilon} - \epsilon e^{2\delta}\right)<-\epsilon x <0.
\end{eqnarray}
For $x<e^{-\delta}$,
\begin{eqnarray}
\frac{dx}{dt}\geq\left(2\epsilon -\frac{1}{\epsilon}\right)x - \frac{2}{\epsilon}x^{3} = x\left[\left(2\epsilon -\frac{1}{\epsilon}\right) - \frac{2}{\epsilon}x^2\right] > x\left[\left(2\epsilon -\frac{1}{\epsilon}\right) - 2e^{-2\delta}\right] >0. 
\end{eqnarray}

Therefore, we get $\frac{dx}{dt}\in F_{\mathcal{F},\delta}(X)$ for all $x$.
\end{example}

\section{Examples}

It has been shown in~\cite{craciun2019polynomial} that variable-$k$ weakly reversible dynamical systems can be embedded into  toric differential inclusions. Theorem~\ref{thm:embed_endotactic_tdi} shows that the larger class of variable-$k$ endotactic dynamical systems can be embedded into toric differential inclusions. It is notable that this embedding into toric differential inclusions also works for dynamical systems generated by endotactic E-graphs that are (in general) not weakly reversible. Such networks manifest themselves quite naturally in biological networks, as we show below.

\subsection{Circadian clock network.} This network is part of a model for the daily sleep/wake cycle in living organisms. 
For some choices of parameter values, the dynamics of this network can give rise to oscillations that repeat approximately every 24 hours. In~\cite{johnston2016computational}, Johnston, Pantea and Donnell consider several versions of a circadian clock introduced in~\cite{leloup1999chaos}, with variable-$k$ mass-action kinetics. In particular, they analyse a basic and a general type of this model, as illustrated below. 

\begin{eqnarray}
\begin{split}
\text{Basic model} &:\{ P + T \rightleftharpoons C,  C \rightarrow 0, 0 \rightleftharpoons P, 0 \rightleftharpoons T \}. \\
\text{General model} &:\{P_{n_{P}} + T_{n_{T}} \rightleftharpoons C_0, C_0\rightleftharpoons C_1\rightleftharpoons ... \rightleftharpoons C_{n_C}, \\
               & C_{n_C}\rightarrow 0, 0 \rightarrow P_0\rightleftharpoons ... \rightleftharpoons P_{n_{P}}, 0 \rightarrow T_0\rightleftharpoons ... \rightleftharpoons T_{n_{T}} \}.
\end{split}
\end{eqnarray}

\begin{figure}[h!]
\centering
\includegraphics[scale=0.43]{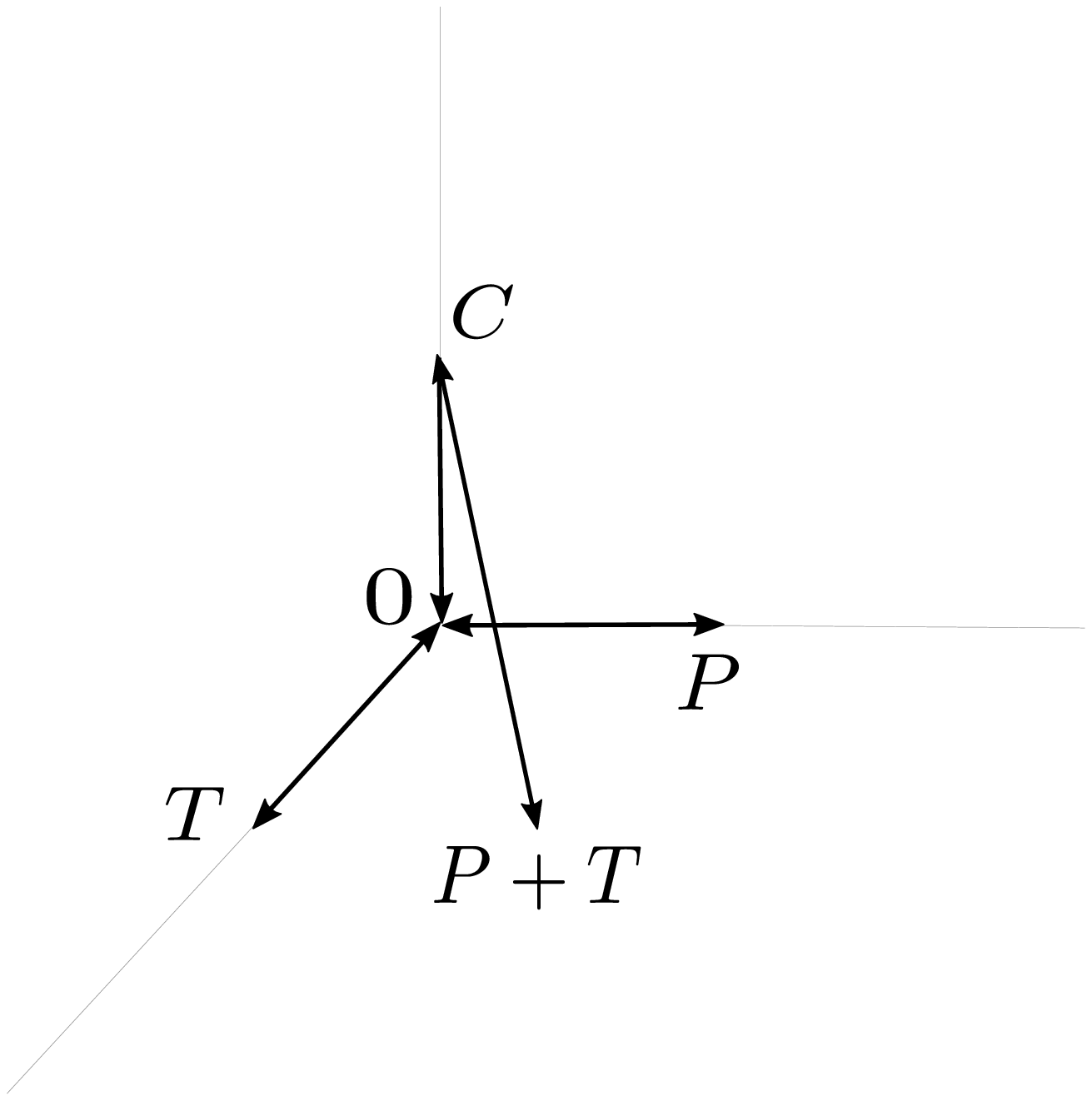}
\caption{\small E-graph depicting a basic circadian clock model.}
\label{fig:circadian_clock}
\end{figure}

Here, $P_i$ and $T_i$ denote the proteins \textit{PER} (period) and \textit{TIM} (timeless) respectively, in various phosphorylation states. $C_i$ denotes different forms of the \textit{PER-TIM} complex. Figure~\ref{fig:circadian_clock} shows the E-graph corresponding to the basic version of the clock. In~\cite{johnston2016computational}, they prove that both the basic and general versions of the circadian clock model are endotactic. Using Theorem~\ref{thm:embed_endotactic_tdi}, we can conclude that the dynamics generated by such circadian clocks can be embedded into a toric differential inclusion.

\subsection{Thomas-type models.} The Thomas-type model~\cite{johnston2016computational,murray2003mathematical} consists of an  inhibition mechanism of the substrate involving uric acid and oxygen, that is catalysed by uricase.

\begin{figure}[h!]
\centering
\includegraphics[scale=0.5]{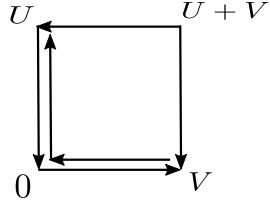}
\caption{\small Example of a Thomas-type model. Here $U$ denotes uric acid and $V$ denotes oxygen.}
\label{fig:thomas_model}
\end{figure}

Figure~\ref{fig:thomas_model} shows an example of a Thomas-type model. This network is shown to be endotactic~\cite{craciun2013persistence} and hence its dynamics can be embedded into a toric differential inclusion.

\subsection{Power law systems.} Power law systems show up very often as mathematical models of biological interaction networks, especially when trying to obtain a good fit with experimental data~\cite{savageau1969biochemical}. A wide variety of power law dynamical systems can be generated from endotactic E-graphs that are not weakly reversible.
For example, consider the following dynamical system 

\begin{eqnarray}\label{eq:power_law_dynamics}
\begin{split}
\frac{dx}{dt}  &= -k_0(t) + k_1(t)x^{-2.3} -k_2(t)y^{3.1} + 0.3k_3(t)x^{-0.5}y^{0.6}\\
\frac{dy}{dt}  &= 0.3k_1(t)x^{-2.3} -k_2(t)y^{3.1} - 0.3k_3(t)x^{-0.5}y^{0.6} \\
\frac{dz}{dt}  &= 0.8k_3(t)x^{-0.5}y^{0.6} - k_4(t)z^2.
\end{split}
\end{eqnarray}

Figure~\ref{fig:power_law_system} shows an E-graph that generates the dynamics given by Equation~\ref{eq:power_law_dynamics}. In this figure, the edge vectors for the edges labelled by the rate constants $k_0,k_1,k_2,k_3,k_4$ are \\$\begin{bmatrix}-1 \\ 0 \\ 0 \end{bmatrix}, \begin{bmatrix}1 \\ 0.3 \\ 0 \end{bmatrix}, \begin{bmatrix}-1 \\ -1 \\ 0 \end{bmatrix}, \begin{bmatrix}0.3 \\ -0.3 \\ 0.8 \end{bmatrix}, \begin{bmatrix} 0 \\ 0 \\ -1 \end{bmatrix}$ respectively. 

\begin{figure}[h!]
\centering
\includegraphics[scale=0.47]{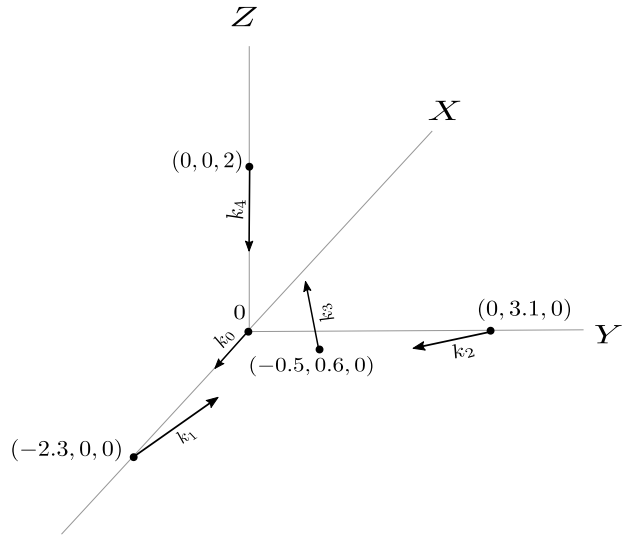}
\caption{\small E-graph depicting an endotactic power-law system.}
\label{fig:power_law_system}
\end{figure}

This E-graph is endotactic and hence the dynamical system generated by it can be embedded into a toric differential inclusion. Moreover, we could replace the terms $0.3k_3(t)x^{-0.5}y^{0.6}$ and $-0.3k_3(t)x^{-0.5}y^{0.6}$ in (\ref{eq:power_law_dynamics}) by $\pm k_5(t)x^{-0.5}y^{0.6}$ and $\pm k_6x^{-0.5}y^{0.6}$ and the corresponding E-graph is still endotactic.

\subsection{Hypercycle-type models.} Our work also has connections to certain hypercycle-type network models~\cite{eigen2012hypercycle}. More precisely, in upcoming work, we show that the dynamics of relative concentrations for certain hypercycle-type networks coincides with the dynamics of some endotactic networks. For example, consider the following hypercycle-type network:

\begin{eqnarray}\label{eq:hypercycle}
\begin{split}
X_1 + X_2 &\xrightarrow[]{k_1} X_1 + 2X_2 \\
X_2 + X_3 &\xrightarrow[]{k_2} X_2 + 2X_3\\
X_3 + X_1 &\xrightarrow[]{k_3} X_3 + 2X_1\\
X_1 + X_2 &\xrightarrow[]{k_4} X_1 + X_2 + X_3\\
X_2 + X_3 &\xrightarrow[]{k_5} X_1 + X_2 + X_3\\
X_1 + X_3 &\xrightarrow[]{k_6} X_1 + X_2 + X_3
\end{split}
\end{eqnarray}

An E-graph that generates the dynamics of the \textit{relative} concentrations for this network is shown in Figure~\ref{fig:hypercycle}. This E-graph is endotactic, and therefore its dynamics can be embedded into a toric differential inclusion.

\begin{figure}[h]
\centering
\includegraphics[scale=0.45]{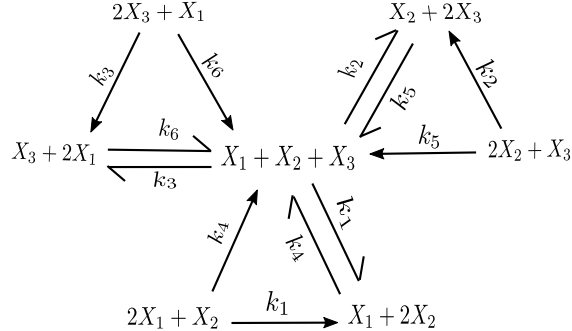}
\caption{\small Endotactic E-graph that generates the dynamics corresponding to the relative concentrations for network (\ref{eq:hypercycle}).}
\label{fig:hypercycle}
\end{figure}

\section{Acknowledgements}
G.C. is supported by NSF grants DMS-1412643 and DMS-1816238. A.D. acknowledges Van Vleck Visiting Assistant Professorship from the Department of Mathematics at University of Wisconsin Madison. We thank Polly Yu for suggesting several improvements to the presentation of these results.

\bibliographystyle{amsplain}
\bibliography{Bibliography}

\providecommand{\bysame}{\leavevmode\hbox to3em{\hrulefill}\thinspace}
\providecommand{\MR}{\relax\ifhmode\unskip\space\fi MR }
\providecommand{\MRhref}[2]{%
  \href{http://www.ams.org/mathscinet-getitem?mr=#1}{#2}
}
\providecommand{\href}[2]{#2}
\begin{thebibliography}{10}

\bibitem{adleman2014mathematics}
L.~Adleman, M.~Gopalkrishnan, M.~Huang, P.~Moisset, and D.~Reishus, \emph{On
  the {M}athematics of the {L}aw of {M}ass {A}ction}, A {S}ystems {T}heoretic
  {A}pproach to {S}ystems and {S}ynthetic {B}iology {I}: {M}odels and {S}ystem
  {C}haracterizations, Springer, 2014, pp.~3--46.

\bibitem{anderson2008global}
D.~Anderson, \emph{Global {A}symptotic {S}tability for a {C}lass of {N}onlinear
  {C}hemical {E}quations}, SIAM J. Appl. Math. \textbf{68} (2008), no.~5,
  1464--1476.

\bibitem{anderson2011proof}
\bysame, \emph{A {P}roof of the {G}lobal {A}ttractor {C}onjecture in the
  {S}ingle {L}inkage {C}lass {C}ase}, SIAM J. Appl. Math. \textbf{71} (2011),
  no.~4, 1487--1508.

\bibitem{anderson2010dynamics}
D.~Anderson and A.~Shiu, \emph{The {D}ynamics of {W}eakly {R}eversible
  {P}opulation {P}rocesses near {F}acets}, SIAM J. Appl. Math. \textbf{70}
  (2010), no.~6, 1840--1858.

\bibitem{angeli2007petri}
D.~Angeli, P.~De~Leenheer, and E.~Sontag, \emph{A {P}etri {N}et {A}pproach to
  {P}ersistence {A}nalysis in {C}hemical {R}eaction {N}etworks}, Math. Biosci.
  \textbf{210} (2007), no.~2, 598--618.

\bibitem{brunner2018robust}
J.~Brunner and G.~Craciun, \emph{{R}obust {P}ersistence and {P}ermanence of
  {P}olynomial and {P}ower {L}aw {D}ynamical {S}ystems}, SIAM J. Appl. Math.
  \textbf{78} (2018), no.~2, 801--825.

\bibitem{craciun2015toric}
G.~Craciun, \emph{Toric {D}ifferential {I}nclusions and a {P}roof of the
  {G}lobal {A}ttractor {C}onjecture}, arXiv:1501.02860 (2015).

\bibitem{craciun2019polynomial}
\bysame, \emph{Polynomial {D}ynamical {S}ystems, {R}eaction {N}etworks, and
  {T}oric {D}ifferential {I}nclusions}, SIAGA \textbf{3} (2019), no.~1,
  87--106.

\bibitem{craciun2009toric}
G.~Craciun, A.~Dickenstein, A.~Shiu, and B.~Sturmfels, \emph{Toric dynamical
  systems}, J. Symb. Comput. \textbf{44} (2009), no.~11, 1551--1565.

\bibitem{craciun2013persistence}
G.~Craciun, F.~Nazarov, and C.~Pantea, \emph{Persistence and {P}ermanence of
  {M}ass-{A}ction and {P}ower-{L}aw {D}ynamical {S}ystems}, SIAM J. Appl. Math.
  \textbf{73} (2013), no.~1, 305--329.

\bibitem{deshpande2014autocatalysis}
A.~Deshpande and M.~Gopalkrishnan, \emph{Autocatalysis in reaction networks},
  Bull. Math. Biol. \textbf{76} (2014), no.~10, 2570--2595.

\bibitem{eigen2012hypercycle}
M.~Eigen and P.~Schuster, \emph{The {H}ypercycle: {A} {P}rinciple of {N}atural
  {S}elf-{O}rganization}, Springer Science \& Business Media, 2012.

\bibitem{feinberg1987chemical}
M.~Feinberg, \emph{Chemical reaction network structure and the stability of
  complex isothermal reactors - {I}. {T}he deficiency zero and deficiency one
  theorems}, Chem. Eng. Sci. \textbf{42} (1987), no.~10, 2229--2268.

\bibitem{fulton1993introduction}
W~Fulton, \emph{Introduction to {T}oric {V}arieties}, Princeton University
  Press, Princeton, New Jersey, 1993.

\bibitem{gopalkrishnan2011catalysis}
M.~Gopalkrishnan, \emph{Catalysis in reaction networks}, Bull. Math. Biol.
  \textbf{73} (2011), no.~12, 2962--2982.

\bibitem{gopalkrishnan2014geometric}
M.~Gopalkrishnan, E.~Miller, and A.~Shiu, \emph{A {G}eometric {A}pproach to the
  {G}lobal {A}ttractor {C}onjecture}, SIAM J. Appl. Dyn. Syst. \textbf{13}
  (2014), no.~2, 758--797.

\bibitem{guldberg1864studies}
C.~Guldberg and P.~Waage, \emph{Studies {C}oncerning {A}ffinity}, CM
  Forhandlinger: Videnskabs-Selskabet {I} Christiana \textbf{35} (1864),
  no.~1864, 1864.

\bibitem{johnston2016computational}
M.~Johnston, C.~Pantea, and P.~Donnell, \emph{A computational approach to
  persistence, permanence, and endotacticity of biochemical reaction systems},
  J. Math. Biol. \textbf{72} (2016), no.~1-2, 467--498.

\bibitem{leloup1999chaos}
J.~Leloup and A.~Goldbeter, \emph{Chaos and {B}irhythmicity in a {M}odel for
  {C}ircadian {O}scillations of the {PER} and {TIM} {P}roteins in
  {D}rosophila}, J. Theor. Biol. \textbf{198} (1999), no.~3, 445--459.

\bibitem{murray2003mathematical}
J.~Murray, \emph{Mathematical {B}iology {I}: An {I}ntroduction, {V}olume {I}},
  2003.

\bibitem{pantea2012persistence}
C.~Pantea, \emph{On the {P}ersistence and {G}lobal {S}tability of
  {M}ass-{A}ction {S}ystems}, SIAM J. Math. Anal. \textbf{44} (2012), no.~3,
  1636--1673.

\bibitem{rockafellar2015convex}
R.~Rockafellar, \emph{Convex {A}nalysis}, Princeton University Press,
  Princeton, New Jersey, 2015.

\bibitem{savageau1969biochemical}
M.~Savageau, \emph{Biochemical systems analysis {I}: Some mathematical
  properties of the rate law for the component enzymatic reactions}, J. Theor.
  Biol. \textbf{25} (1969), no.~3, 365--369.

\bibitem{siegel2000global}
D.~Siegel and D.~MacLean, \emph{Global stability of complex balanced
  mechanisms}, J. Math. Chem. \textbf{27} (2000), no.~1-2, 89--110.

\bibitem{sontag2001structure}
E.~Sontag, \emph{Structure and {S}tability of {C}ertain {C}hemical {N}etworks
  and {A}pplications to the {K}inetic {P}roofreading {M}odel of {T}-{C}ell
  {R}eceptor {S}ignal {T}ransduction}, IEEE Trans. Automat. Contr. \textbf{46}
  (2001), no.~7, 1028--1047.

\bibitem{yu2018mathematical}
P.~Yu and G.~Craciun, \emph{Mathematical {A}nalysis of {C}hemical {R}eaction
  {S}ystems}, Isr. J. Chem. \textbf{58} (2018), no.~6-7, 733--741.

\bibitem{ziegler2012lectures}
G.~Ziegler, \emph{Lectures on {P}olytopes}, vol. 152, Springer Science \&
  Business Media, 2012.

\end{thebibliography}

\end{document}